\documentclass[reqno, 12pt]{amsart}
\usepackage{amsmath,amsthm,amsfonts,amssymb}

\date{}
\setlength{\textwidth}{17truecm} \setlength{\textheight}{25truecm}
\setlength{\oddsidemargin}{0pt} \setlength{\oddsidemargin}{0cm}
\setlength{\evensidemargin}{0cm} \setlength{\topmargin}{-25pt}

\newtheorem{theorem}{Theorem}[section]
\newtheorem{lemma}[theorem]{Lemma}
\newtheorem{corollary}[theorem]{Corollary}
\newtheorem{remark}[theorem]{Remark}
\newtheorem{proposition}[theorem]{Proposition}
\newtheorem{example}[theorem]{Example}
\numberwithin{equation}{section}

\begin{document}

\centerline{\large{\bf On the uniqueness of solutions to continuity equations}}

\vspace*{0.2cm}

\centerline{\bf V.I. Bogachev$^{a}$\footnote{corresponding author.
{\it E-mail addresses}: vibogach@mail.ru (V. Bogachev), daprato@sns.it (G. Da Prato),
roeckner@math.uni-bielefeld.de (M. R\"ockner), starticle@mail.ru (S. Shaposhnikov)},
G. Da Prato$^{b}$,  M. R\"ockner$^{c}$,
S.V. Shaposhnikov$^{a}$}

\vspace*{0.2cm}

\small{
$^{a}$
{\it Department of Mechanics and Mathematics, Moscow State
University, 119991 Moscow, Russia and National Research University Higher School of Economics, Moscow, Russia}

$^{b}$  {\it Scuola Normale Superiore di Pisa, Piazza dei Cavalieri, 56100 Pisa, Italy}

$^{c}$
{\it Fakult\"at f\"ur Mathematik, Universit\"at Bielefeld,
D-33501 Bielefeld, Germany}
}

\vspace*{0.2cm}

\noindent
{\bf Abstract}

We obtain sufficient conditions for
the uniqueness of solutions to the Cauchy problem for the continuity equation
in classes of measures that need not be absolutely continuous.

 \vspace*{0.2cm}

 \noindent
 {\it Keywords:} Continuity equation, Cauchy problem, Uniqueness

\section*{Introduction}

In this paper we study the uniqueness problem for the continuity equation
$$
\partial_t\mu_t+{\rm div}(b\mu_t)=0
$$
with respect to measures on $\mathbb{R}^d$.
We consider solutions given by families of locally bounded Borel measures $(\mu_t)_{t\in[0, T]}$.
A precise definition is given below.

There is a vast literature devoted to uniqueness and existence problems for the Cauchy problem for
such equations. An important problem is to specify a class of measures  $\mu_t$ in which, under reasonable
assumptions about coefficients and initial data, there is a unique solution to the Cauchy problem.
Certainly, if the coefficients are sufficiently regular, say, Lipschitzian or satisfy the Osgod type condition,
then we can take the whole class
of bounded measures $\mu_t$ (see, e.g., \cite{AB}).
According to a well-known result of Ambrosio \cite{A08} (see also \cite{Man} for equations with a potential term)
on representations of nonnegative bounded solutions by means of averaging
with respect to measures concentrated
on solutions to the corresponding  ordinary equation $\dot{x}=b(x, t)$,
any uniqueness condition for the  ordinary equation guarantees
uniqueness in the class of nonnegative bounded measures.
However, in the class of signed measures there is no such representation.

In the case of non smooth coefficients a class convenient in many respects  is the class of
measures absolutely continuous with respect to Lebesgue measure, which
is quite natural, in particular, taking into account existence results.
A~study of this class initiated by Cruzeiro \cite{Cruz1}, \cite{Cruz2}
and DiPerna and Lions \cite{DiPL} was continued by many researchers.
A~large number of papers are devoted to the so-called Lagrangian flows and their generalizations
(see \cite{A04} and~\cite{A08}).
However, this class of absolutely continuous measures
is rather narrow, in particular, it does not enable one
to deal with singular initial data (and is essentially oriented towards vector fields having at least some
minimal regularity such as the existence of their divergence or being BV).
In addition, this class has no universal analogs in infinite dimensions.
There are several papers (see \cite{Cruz1}, \cite{P}, \cite{BMW}, \cite{UZ}, \cite{A04}, \cite{DPFR},
and~\cite{KR}) concerned with the infinite-dimensional case and using as reference measures
certain special measures (all reducing to the absolute continuity with respect  to Lebesgue measure
when the infinite-dimensional state space is replaced by~$\mathbb{R}^d$) such as Gaussian measures,
convex measures, and differentiable measures, which becomes
rather restrictive in infinite dimensions in  spite of importance of such classes of measures
in applications (e.g., when they are Gibbs measures) .
The recent paper \cite{AT} develops continuity equations in metric measure spaces, but again only considering solutions
absolutely continuous with respect to the underlying fixed measure.

Thus, it is natural to look for other classes of measures,
apart from absolutely continuous measures, in which the existence and
uniqueness of solutions hold in the case of non-smooth coefficients.
In this paper we consider the finite-dimensional case; it turns out that even in the one-dimensional
case in the present framework new results can be obtained.

The main result in this direction
obtained  in our paper can be briefly formulated as follows:
uniqueness holds in a certain class of measures with respect to which the given vector field  $b$
can be suitably approximated by smooth vector fields (thus, the uniqueness class may depend on~$b$).
A~precise formulation is given below  (see Theorems~\ref{th1} and~\ref{th2}), but we observe that this
result is consistent with typical methods of constructing
solutions when $b$ is approximated by smooth fields $b_k$ and the solution is obtained as a limit point
of the sequence of solutions $\mu_t^k$ for~$b_k$.
As in many existing papers, our conditions admit discontinuous fields,
but the hypotheses are mostly incomparable
(with the already cited papers and, e.g., \cite{BC}, \cite{BJ}, \cite{CrD}, \cite{Del}, \cite{Del08},
\cite{Des}, and~\cite{HLB}).

As an application we obtain
some new results for the continuity equation with a merely continuous coefficient $b$.
In particular, we substantially improve the recent result from \cite{Cr},
where the uniqueness is proved in dimension one
for absolutely continuous solutions under the assumptions
that $b$ is continuous and nonnegative,
the trajectories of $\dot{x}=b(x)$ do not blow up in finite time and the set
of zeros $Z=\{x\colon b(x)=0\}$ consists of a finite union of points
and closed intervals.
For example, we prove the following assertion: if $b\in C(\mathbb{R})$
and $0\le b(x)\le C+C|x|$ (which is a constructive condition to exclude a blowup), then
the uniqueness holds in the class of all
locally bounded {\rm(}possibly signed{\rm)} measures $\mu$ on $\mathbb{R}^d\times[0, T]$
given by families of locally bounded measures $(\mu_t)_{t\in[0, T]}$  such that
$$
|\mu_t|(\partial Z)=0 \quad \hbox{\rm for almost all} \quad t\in[0, T],
$$
where $\partial Z$ is the
boundary of the set $Z=\{x\colon\ b(x)=0\}$. So there are no structure restrictions
on the zero set of $b$ and no assumption that $\mu$ has a density.
Moreover, we consider the multidimensional case.
Finally, we prove an existence result which produces solutions with our uniqueness
properties.

\section{Main result}

Let us consider the Cauchy problem
\begin{equation}\label{e1}
\partial_t\mu+{\rm div}(b\mu)=0, \quad \mu|_{t=0}=\nu,
\end{equation}
where $\nu$ is a locally bounded Borel measure on $\mathbb{R}^d$, i.e., a real function on the
class of all bounded Borel sets in $\mathbb{R}^d$ that is countably additive on
the class of Borel subsets of every compact set. Locally bounded Borel measures
on $\mathbb{R}^d\times [0,T]$
are defined similarly.
In particular,   for every compact set $K\subset \mathbb{R}^d\times [0,T]$,
the total variation of a locally
bounded Borel measure $\mu$ on $K$ (denoted by $|\mu|(K)$) is finite.
A Borel measure is  bounded if it has a finite total variation.

We say that a locally bounded Borel measure $\mu$ on $\mathbb{R}^d\times[0, T]$
is given by a family of Borel locally bounded measures $(\mu_t)_{t\in [0, T]}$ on $\mathbb{R}^d$
if, for every bounded Borel set $B\subset \mathbb{R}^d$, the mapping $t\mapsto \mu_t(B)$ is measurable,
$|\mu_t|(K)\in L^1[0, T]$
for every compact $K\subset \mathbb{R}^d$ and
$$
\int_0^T\int_{\mathbb{R}^d}u(x, t)\,\mu(dxdt)=\int_0^T\int_{\mathbb{R}^d}u(x, t)\,\mu_t(dx)\,dt
\quad \forall\, u\in C^{\infty}_0(\mathbb{R}^d\times(0, T)).
$$
Clearly, the previous equality extends to bounded Borel measurable functions $u$
that vanish for $x$ outside a ball.
In the considered framework there is no difference between measures on
$\mathbb{R}^d\times [0,T]$ and on $\mathbb{R}^d\times (0,T)$, because we study only measures
of the above form represented by families of measures on~$\mathbb{R}^d$ via Lebesgue measure.
Not every measure on  $\mathbb{R}^d\times [0,T]$ has this property, of course.

A Borel locally bounded measure $\mu$ on $\mathbb{R}^d\times[0, T]$ given by
a family of locally bounded measures $(\mu_t)_{t\in[0, T]}$
is a solution to the Cauchy problem (\ref{e1}) if $b\in L^1(|\mu|, U\times [0, T])$
for every ball $U\subset\mathbb{R}^d$ and, for every  function
$u$ in the class $C^{1, 1}(\mathbb{R}^d\times [0, T))$  (consisting of functions that are continuous
on $\mathbb{R}^d\times [0, T)$ along with their first order derivatives
in~$t$ and~$x$)
 such that $u(x, t)\equiv 0$ if $|x|>R$ for some~$R$, one has
\begin{equation}\label{ee1}
\int_{\mathbb{R}^d}u(x, t)\,\mu_t(dx)=\int_{\mathbb{R}^d}u(x, 0)\,\nu(dx)
+\int_0^t\int_{\mathbb{R}^d}\Bigl[\partial_tu+\langle b, \nabla u\rangle\Bigr]\,\mu_s(dx)\,ds
\end{equation}
for almost every $t\in[0, T]$. Note that the integrals exist for almost all $t$.

Let us observe that it is possible to pick a common full measure set $S\subset [0,T]$ such that
(\ref{ee1}) will hold for each $t\in S$ for all admissible functions~$u$. Indeed, such a set
exists for a countable collection of functions~$u$, so it is sufficient to choose this countable
 collection  in such a way that for any function $u$ in the indicated class there is a sequence
 $\{u_n\}$ in this collection that is uniformly bounded along with
 the derivatives in $t$ and $x$ and $u_n\to u$, $\partial_t u_n\to \partial_t u$,
 $\nabla u_n\to \nabla u$ pointwise.

\begin{theorem}\label{th1}
Let $\mu^1=\mu^1(dx)\,dt$ and $\mu^2=\mu^2(dx)\,dt$ be two solutions to the
Cauchy problem~{\rm (\ref{e1})}.
Assume that the measure $\mu=\mu_t(dx)\,dt$, where $\mu_t=\mu_t^1-\mu_t^2$, satisfies the following conditions:
for some number $C_1\ge 0$ and
for every ball $U\subset\mathbb{R}^d$ one can find a number $C_2\ge 0$,
a sequence of vector fields $b_k\in C^{\infty}(\mathbb{R}^d\times \mathbb{R}^1,\mathbb{R}^d)$ and
a sequence of positive functions $V_k\in C^1(\mathbb{R}^d)$
{\rm(}all depending on the considered measure $\mu$, the constant $C_1$ and the ball $U${\rm)} such that

\, {\rm (i)} \, $|b_k(x, t)|\le C_1+C_1|x|$,

\, {\rm (ii)} \,
$\inf_k\inf_{U}V_k(x)>0$ and
$$
\langle b_k(x, t), \nabla V_k(x)\rangle \le
\Bigl(C_2-2\max_{|\xi|=1}\langle\mathcal{B}_k(x, t)\xi, \xi\rangle\Bigr)V_k(x)
$$
for every $k$ and $(x, t)\in U\times[0, T]$, where $\mathcal{B}_k=(\partial_{x_i}b^j_k)_{i,j\le d}$,

\, {\rm (iii)} \, $\lim\limits_{k\to\infty}\|(b_k-b)\sqrt{V_k}\|_{L^1(|\mu_t|\,dt, \,U\times[0, T])}=0$.

Then $\mu=0$, i.e., $\mu^1=\mu^2$.
\end{theorem}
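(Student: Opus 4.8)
The plan is to test the continuity equation for the difference $\mu$ against carefully chosen test functions built from the auxiliary functions $V_k$ and from solutions of an auxiliary linear parabolic (dual) problem for the smooth field $b_k$. Since $\mu$ solves $\partial_t\mu+\mathrm{div}(b\mu)=0$ with $\mu|_{t=0}=0$ in the weak sense \eqref{ee1}, for any admissible $u$ we have $\int_{\mathbb R^d}u(x,t)\,\mu_t(dx)=\int_0^t\int_{\mathbb R^d}[\partial_s u+\langle b,\nabla u\rangle]\,\mu_s(dx)\,ds$. The idea is to fix a terminal time $\tau$, a smooth cutoff $\zeta$ localizing to a ball $U$, and a smooth terminal datum $g$, and to let $u=u_k$ solve the backward Cauchy problem $\partial_s u_k+\langle b_k,\nabla u_k\rangle=0$ on $[0,\tau]$ with $u_k(\cdot,\tau)=g$. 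Then $\partial_s u_k+\langle b,\nabla u_k\rangle=\langle b-b_k,\nabla u_k\rangle$, so that, after also inserting the spatial cutoff to make $u_k$ admissible (vanishing for large $|x|$), one gets
\[
\int_{\mathbb R^d}g\,\mu_\tau(dx)=\int_0^\tau\int_{\mathbb R^d}\langle b-b_k,\nabla u_k\rangle\,\mu_s(dx)\,ds+(\text{cutoff error terms}).
\]
To close the argument it suffices to show the right-hand side tends to $0$ as $k\to\infty$; since $g$ is an arbitrary smooth function supported in $U$ and $\tau$ is (a.e.) arbitrary, this forces $\mu_\tau\llcorner U=0$ for a.e.\ $\tau$, and then $\mu=0$ by exhausting $\mathbb R^d$ with balls.

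The role of condition (ii) is exactly to control $\nabla u_k$. Differentiating the transport equation for $u_k$ in $x_i$, the vector $w=\nabla u_k$ satisfies $\partial_s w+ (b_k\cdot\nabla) w + \mathcal B_k^\top w = 0$; testing against $w/V_k$ and using the Bernstein-type inequality in (ii) — which bounds $\langle b_k,\nabla V_k\rangle/V_k$ by $C_2$ minus twice the top eigenvalue of the symmetrized $\mathcal B_k$ — one obtains along characteristics of $b_k$ a differential inequality of the form $\frac{d}{ds}\big(V_k^{-1}|w|^2\big)\le C_2\,V_k^{-1}|w|^2$ (roughly; the precise bookkeeping of the $\nabla V_k$ term against the drift is what (ii) is designed for). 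Hence $|\nabla u_k(x,s)|^2 \le V_k(x)\, e^{C_2(\tau-s)}\,\sup_{U}\big(V_k^{-1}|\nabla g|^2\big)$ for $x$ in the relevant region, i.e.\ $|\nabla u_k|\le \mathrm{const}\cdot\sqrt{V_k}$ with a constant independent of $k$ (here $\inf_k\inf_U V_k>0$ and the $C^1$ smoothness of $g$ enter). Feeding this into the error integral gives
\[
\Big|\int_0^\tau\!\!\int \langle b-b_k,\nabla u_k\rangle\,\mu_s\,ds\Big|\le \mathrm{const}\cdot\|(b_k-b)\sqrt{V_k}\|_{L^1(|\mu_s|\,ds,\,U\times[0,T])}\to 0
\]
by (iii), which is the crux of the matter.

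Condition (i), the linear growth of $b_k$, is the technical ingredient that makes all of the above legitimate: it guarantees that the characteristic ODE $\dot x=b_k(x,s)$ is globally well posed with no blow-up, that $u_k$ exists globally and grows at most polynomially (so that, multiplied by a cutoff that slides to infinity, the cutoff commutator terms $\langle b,\nabla u_k\rangle\,(\text{derivatives of cutoff})$ are supported far out and can be shown to vanish in the limit using $b\in L^1(|\mu|)$ locally and the local boundedness of $|\mu_s|$ in $L^1(dt)$), and that the domain of dependence of $g$ under the backward flow stays in a fixed larger ball on $[0,\tau]$, which is what lets us only assume (ii)–(iii) on $U$ rather than globally. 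I expect the main obstacle to be precisely this localization: making rigorous that one may solve the dual equation only with data supported in $U$, track that the relevant gradient estimate only needs to hold on a slightly enlarged ball, and dispose of the spatial-cutoff error terms — in particular justifying that $u_k$ may be used as a legitimate test function in \eqref{ee1} despite being only constructed, not assumed, and controlling the error from replacing it by a compactly supported truncation. The gradient estimate itself (the parabolic Bernstein/Gronwall step) is standard once (ii) is in hand; the delicate part is the interplay of the growth control (i), the uniform lower bound on $V_k$, and the $L^1$-convergence (iii) on the fixed ball.
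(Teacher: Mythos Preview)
Your proposal is correct and is precisely the paper's argument: solve the backward transport equation $\partial_t f+\langle b_k,\nabla f\rangle=0$ with $f|_{t=s}=\psi\in C_0^\infty$, derive the Bernstein bound $|\nabla f|^2\le V_k\,e^{C_2(s-t)}\sup_y\bigl(|\nabla\psi(y)|^2/V_k(y)\bigr)$ from (ii) via the maximum principle applied to $w=|\nabla f|^2/V_k$, and then pass to the limit using~(iii). The cutoff worry you flag is unnecessary and is the one place you overcomplicate: the linear growth bound (i) confines the backward characteristics of $b_k$ issued from $\mathrm{supp}\,\psi$ to a ball $\{|x|<R\}$ with $R=R(s,\psi,C_1)$ independent of $k$, so $f$ is already compactly supported and directly admissible in~\eqref{ee1}; one simply determines $R$ first from $C_1$ and $\psi$, then invokes hypotheses (ii)--(iii) on the ball $\{|x|<2R\}$, and there are no cutoff error terms at all.
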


Note that the equality $\mu^1=\mu^2$ is equivalent to the equality $\mu^1_t=\mu^2_t$
for almost every $t\in [0,T]$; changing $\mu_t$ for $t$ in a measure zero set we do not
change the solution. However, under broad assumptions about $b$, one can find
a version of $t\mapsto \mu_t$ that is continuous in the sense of generalized functions
(or even weakly continuous in the case of probability measures), and then such versions
are uniquely defined. Moreover, in that case (\ref{ee1}) holds poitwise and,
as $t\to 0$, the measures $\mu_t$ converge to the initial
measure $\nu$ in the respective sense (as distributions or weakly). This can be done
by taking  in (\ref{ee1}) functions $u$ independent of~$t$ (see \cite[Lemma~2.1]{BDPR}).

So in order to find a uniqueness class  for the Cauchy problem (\ref{e1}) one should find
approximations of $b$ satisfying conditions (i), (ii) and describe measures satisfying condition~(iii).

Let us consider the standard example of an ordinary differential equation on the real line
without uniqueness and see what kind of uniqueness for the associated continuity equation
is offered by our theorem.

\begin{example}\rm
Let $d=1$ and $b(x)=\sqrt{|x|}$.
Set $b_k(x)=(x^2+k^{-2})^{1/4}$.
It is obvious that
$$|b(x)-b_k(x)|\le k^{-1/2}.$$
Let us calculate $b_k'(x)$:
$$
b_k'(x)=\frac{x}{2(x^2+k^{-2})^{3/4}}.
$$
Finally, we take
$$V_k(x)=\frac{1}{b_k(x)^2}=\frac{1}{(x^2+k^{-2})^{1/2}}.$$
Clearly, (ii) is fulfilled.
Moreover, we have
$$
g_k(x):=|b(x)-b_k(x)|\sqrt{V_k(x)}=|b(x)-b_k(x)|b_k(x)^{-1}.
$$
Note that $g_k(x)=1$ if $b(x)=0$, i.e., $x=0$. If $x\neq 0$, then
$$
g_k(x)\le\frac{k^{-1/2}}{|x|^{1/2}}\to 0 \quad \hbox{if} \quad k\to\infty.
$$
Thus $|g_k|\le 1$ and $g_k\to I_{\{0\}}$, where $I_{\{0\}}$ is the indicator
of the set $\{b=0\}=\{0\}$. Let $\mu=\mu_t\,dt$ be a locally bounded
measure on $\mathbb{R}^1\times[0, T]$.
According to the Lebesgue dominated convergence theorem, one has
$$
\lim_{k\to \infty}\int_0^T\int_{U}|b-b_k|\sqrt{V_k}\,d|\mu_t|\,dt=
\int_0^T|\mu_t|(\{x\in U\colon\ b(x)=0\})\,dt
$$
for every interval $U$.
Hence the uniqueness holds in the class of all locally bounded (possibly, signed)
solutions $\mu=\mu_t\,dt$ such that $|\mu_t|(\{0\})=0$ for almost all $t\in[0, T]$.

It is interesting that this  result is sharp:
there exist two solutions $\mu_t^1\equiv \delta_0$ and $\mu_t^2=\delta_{t^2/4}$
to the Cauchy problem with $b(x)=\sqrt{|x|}$ and $\nu=\delta_0$.
Note that only $\mu_t^2$ belongs to our  uniqueness class and that this solution is not absolutely
continuous.
\end{example}

The proof of Theorem \ref{th1} is based on the maximum principle and Holmgren's principle.

\begin{lemma}\label{l01}
Suppose that $h\in C^{\infty}(\mathbb{R}^d\times \mathbb{R}^1,\mathbb{R}^d)$,
$|h(x, t)|\le C_1+C_1|x|$
for some number $C_1>0$ and for all $(x, t)\in \mathbb{R}^d\times[0, T]$.
Assume also that for some positive function $V\in C^1(\mathbb{R}^d)$,
and number $C_2>0$ one has
$$
\langle h(x, t), \nabla V(x)\rangle\le
\Bigl(C_2-2\sup_{|\xi|=1}\langle\mathcal{H}(x, t)\xi, \xi\rangle\Bigr)V(x),
$$
for all $(x, t)\in \mathbb{R}^d\times[-1, T]$, where
$\mathcal{H}(x, t)=(\partial_{x_i}h^j(x, t))_{1\le i, j\le d}$.

Then, for any $s\in (0,T)$, the Cauchy problem
$$
\partial_tf+\langle h, \nabla f\rangle=0, \quad f|_{t=s}=\psi,
$$
where $\psi\in C^{\infty}_0(\mathbb{R}^d)$,
has a smooth solution $f$ on $\mathbb{R}^d\times (-1,s]$ such that
$$
|f(x, t)|\le \max_y |\psi(y)|, \quad
|\nabla f(x, t)|^2\le V(x)e^{C_2(s-t)}\max_y \frac{|\nabla\psi(y)|^2}{V(y)}, \quad t\in [0,s].
$$
Moreover, $f(x, t)=0$ if $|x|>R$ for some number $R=R(s, \psi, C_1)>0$.
\end{lemma}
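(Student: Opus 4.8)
The plan is to solve the transport equation by the method of characteristics. First I would set up the characteristic ODE
\[
\dot X(x,\tau) = h\bigl(X(x,\tau),\tau\bigr),\qquad X(x,s)=x,
\]
and note that by hypothesis (i), the linear-growth bound $|h(x,t)|\le C_1+C_1|x|$ guarantees (via Gr\"onwall) that trajectories do not blow up on the compact time interval $[-1,s]$, so the flow $X(\cdot,\tau)$ exists for all $\tau\in[-1,s]$ and is a smooth diffeomorphism of $\mathbb{R}^d$ for each fixed $\tau$ (smoothness in $(x,\tau)$ follows from smoothness of $h$). Then $f(x,t):=\psi\bigl(X^{-1}_{t,s}(x)\bigr)$, i.e. $f$ is constant along characteristics with terminal value $\psi$ at time $s$, is the desired smooth solution on $\mathbb{R}^d\times(-1,s]$. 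The sup bound $|f(x,t)|\le\max_y|\psi(y)|$ is then immediate since $f$ only takes values of $\psi$.

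Next I would establish compact support. Fix $R_0$ with $\mathrm{supp}\,\psi\subset\{|y|\le R_0\}$. If $f(x,t)\ne 0$ then the characteristic through $(x,t)$ reaches a point of $\{|y|\le R_0\}$ at time $s$; running the Gr\"onwall estimate backwards in time from $s$ down to $t\in[-1,s]$ bounds $|x|$ by a constant $R=R(s,\psi,C_1)$ depending only on $R_0$, $C_1$ and the length $s+1$ of the time interval. Hence $f(x,t)=0$ for $|x|>R$, which also legitimizes all the integrations by parts and differentiations under the integral that the gradient estimate will need, since everything happens on a fixed compact set in $x$.

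The gradient estimate is the heart of the matter and the step I expect to be the main obstacle. Differentiating the transport equation in $x_i$ gives, with $w:=\nabla f$,
\[
\partial_t w + \langle h,\nabla\rangle w + \mathcal{H}^{\!*} w = 0,
\]
so along a characteristic $X(\cdot)$ the vector $w(X(t),t)$ satisfies the linear ODE $\frac{d}{dt}w = -\mathcal{H}(X(t),t)^{\!*}w$. The idea is to run a Gr\"onwall/maximum-principle argument on the scalar quantity
\[
\Phi(x,t):=\frac{|\nabla f(x,t)|^2}{V(x)}.
\]
Computing $\partial_t\Phi+\langle h,\nabla\Phi\rangle$ along characteristics and using $\frac{d}{dt}|w|^2 = -2\langle \mathcal{H}^{\!*}w,w\rangle \le 2\sup_{|\xi|=1}\langle(-\mathcal{H})\xi,\xi\rangle\,|w|^2$ together with $\frac{d}{dt}V(X(t)) = \langle h,\nabla V\rangle(X(t),t)$ and the key structural hypothesis $\langle h,\nabla V\rangle\le (C_2-2\sup_{|\xi|=1}\langle\mathcal{H}\xi,\xi\rangle)V$, one finds that along each characteristic
\[
\frac{d}{dt}\log\Phi(X(t),t)\ \le\ C_2,
\]
because the two occurrences of $2\sup_{|\xi|=1}\langle\mathcal{H}\xi,\xi\rangle$ (one from the numerator, one from the $V$-term) combine to cancel, leaving only $C_2$. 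Here one must be a little careful with the elementary inequality relating $\langle\mathcal{H}^{\!*}w,w\rangle/|w|^2$ to $\sup_{|\xi|=1}\langle\mathcal{H}\xi,\xi\rangle$ (the quadratic form of a matrix and of its transpose agree) and, at points where $\nabla f=0$, to argue by a limiting/continuity argument or by bounding $|w|^2$ rather than $\log$. Integrating from $t$ up to $s$ and using $\Phi(X(s),s)=|\nabla\psi|^2/V$ at the terminal point gives
\[
\Phi(x,t)\ \le\ e^{C_2(s-t)}\,\Phi(X_{t,s}^{-1}\cdots)\big|_{\text{terminal}}\ \le\ e^{C_2(s-t)}\max_y\frac{|\nabla\psi(y)|^2}{V(y)},
\]
which is exactly the claimed bound $|\nabla f(x,t)|^2\le V(x)e^{C_2(s-t)}\max_y|\nabla\psi(y)|^2/V(y)$ for $t\in[0,s]$. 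The restriction to $t\in[0,s]$ (rather than $[-1,s]$) in the two estimates, while the hypothesis on $h$ is assumed on $[-1,T]$, is just so that the extra room on $[-1,0]$ is available for later use (in the proof of Theorem~\ref{th1}) without affecting this lemma.
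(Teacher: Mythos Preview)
Your plan is essentially the paper's: solve by characteristics, get compact support from the linear growth of $h$, and control the gradient by a Gr\"onwall/maximum-principle argument on $|\nabla f|^2/V$. The paper packages the last step without logarithms: it sets $u=\tfrac12|\nabla f|^2$, derives $\partial_t u+\langle h,\nabla u\rangle+2u\sup_{|\xi|=1}\langle\mathcal H\xi,\xi\rangle\ge0$, substitutes $u=wV$, obtains $\partial_t w+\langle h,\nabla w\rangle+qw\ge0$ with $q\le C_2$, and invokes the maximum principle for $w$ directly. This avoids the case $\nabla f=0$ that you have to treat separately.

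There is, however, a direction error in your key step that makes the argument fail as written. From $\tfrac{d}{dt}|w|^2=-2\langle\mathcal H w,w\rangle$ along a characteristic, the useful inequality is the \emph{lower} bound
\[
\frac{d}{dt}|w|^2\ \ge\ -2\Bigl(\sup_{|\xi|=1}\langle\mathcal H\xi,\xi\rangle\Bigr)|w|^2,
\]
because we want $\Phi(t)$ controlled by the terminal value $\Phi(s)$ with $t<s$. Your stated upper bound $\tfrac{d}{dt}|w|^2\le 2\sup_{|\xi|=1}\langle(-\mathcal H)\xi,\xi\rangle\,|w|^2$ is true but useless here: the quantity $\sup\langle(-\mathcal H)\xi,\xi\rangle=-\inf\langle\mathcal H\xi,\xi\rangle$ does \emph{not} match the $\sup\langle\mathcal H\xi,\xi\rangle$ appearing in the hypothesis on $V$, so the promised cancellation does not occur. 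Combining instead the lower bound above with $\tfrac{d}{dt}\log V=\langle h,\nabla V\rangle/V\le C_2-2\sup\langle\mathcal H\xi,\xi\rangle$ gives
\[
\frac{d}{dt}\log\Phi(X(t),t)\ \ge\ -C_2,
\]
not $\le C_2$; integrating from $t$ to $s$ then yields $\log\Phi(s)-\log\Phi(t)\ge -C_2(s-t)$, i.e.\ $\Phi(t)\le e^{C_2(s-t)}\Phi(s)$, which is what you want. With this sign fix your argument goes through and is equivalent to the paper's.
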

\begin{proof}
The solution $f$ is given by the equality $f(x_0, t_0)=\psi(x(s))$,
where $x(\cdot)$ is a solution to the ordinary equation $\dot{x}(t)=h(x(t), t)$, $x(t_0)=x_0$.
Note that
$$\frac{d}{dt}|x(t)|^2=2\langle h(x(t), t), x(t)\rangle\ge -C_1-3C_1|x(t)|^2.$$
Hence
$$
|x(t)|^2\ge |x_0|^2e^{3C_1(t_0-t)}+C_1(t_0-t)e^{-3C_1t}\ge |x_0|^2e^{-3C_1s}-C_1s.
$$
Assume that $\psi(x)=0$ if $|x|>r$. Then there exists a number $R=R(s, r, C_1)$ such that
$$
|x_0|^2e^{-3C_1s}-C_1s\ge r \quad\hbox{if} \quad |x_0|\ge R.
$$
 Therefore,  $|x(s)|\ge r$ and $f(x_0, t_0)=0$.

Let us prove the announced gradient estimate.
Set  $u=2^{-1}\sum_{k=1}^{d}|\partial_{x_{k}}f|^{2}$.
Differentiating the equation $\partial_{t}f+\langle h, \nabla f\rangle=0$
with respect to $x_{k}$ and multiplying by $\partial_{x_{k}}f$ we find that
$$
\partial_{t}u+\langle h, \nabla u\rangle+\langle\mathcal{H}\nabla f,\nabla f\rangle=0.
$$
Since
$\langle\mathcal{H}\nabla f,\nabla f\rangle\le 2u\sup_{|\xi|=1}\langle\mathcal{H}(x, t)\xi, \xi\rangle,$
we have
$$
\partial_tu+\langle h, \nabla u\rangle+
2u\sup_{|\xi|=1}\langle\mathcal{H}\xi, \xi\rangle\ge 0.
$$
Set  $u=wV$. Then
$$
\partial_{t}w+\langle h, \nabla w\rangle+qw\ge0,
$$
where
$$
q=2\sup_{|\xi|=1}\langle\mathcal{H}\xi, \xi\rangle
+\langle h, \nabla V\rangle V^{-1}\le C_2.
$$
Note that $w(x, s)\le\max_y |\nabla\psi(y)|^2/V(y)$.
Then the maximum principle (see \cite[Theorem 3.1.1]{StrV}) yields that
$w(x, t)\le e^{C_2(s-t)}\max_y |\nabla\psi(y)|^2/V(y)$, which completes the proof.
\end{proof}

\begin{proof}[Proof of Theorem \ref{th1}]
Let us fix $\psi\in C^{\infty}_0(\mathbb{R}^d)$ and $s$ in the full measure set $S\subset (0, T)$
mentioned before the theorem (such that (\ref{ee1}) holds for all admissible $u$ and each $t\in S$).
Let $R=R(s, \psi, C_1)$ be a number from Lemma \ref{l01}. According to the hypotheses of the theorem,
for the ball $\{x\colon |x|<2R\}$ there exist sequences $b_k$ and $V_k$
satisfying all conditions (i)--(iii).

According to Lemma \ref{l01} applied with $h=b_k$ there exists a smooth
solution $f$ to the Cauchy problem
$$
\partial_tf+\langle b_k, \nabla f\rangle=0, \quad f|_{t=s}=\psi
$$
satisfying the estimate
$$
|f(x, t)|\le\max_y |\psi(y)|, \quad |\nabla f(x, t)|^2
\le V_k(x)e^{C_2(s-t)}\max_y \frac{|\nabla\psi(y)|^2}{2V_k(y)}, \quad (x,t)\in \mathbb{R}^d\times [0,s].
$$
Moreover, $f(x, t)=0$ if $|x|>R$.
Certainly, $f$ depends on several parameters ($k$, $s$, $\psi$, etc.), which is suppressed in our notation.

Let $U$ be a ball containing the support of $\psi$.  By our assumptions, $C(U)=\inf_k\inf_{U}V_k(x)>0$, hence it follows that
$$
|\nabla f(x, t)|\le (2C(U))^{-1}e^{C_2(s-t)/2}\sqrt{V_k(x)}\max|\nabla\psi|.
$$
Substituting the function $u=f$ in  (\ref{ee1}) for the solution $\mu_t=\mu_t^1-\mu_t^2$,
we arrive at the following equality:
$$
\int_{\mathbb{R}^d}\psi\,d\mu_s=
\int_0^s\int_{\mathbb{R}^d}\langle b-b_k, \nabla f\rangle\,d\mu_t\,dt.
$$
Hence
$$
\int_{\mathbb{R}^d}\psi\,d\mu_s\le
\widetilde{C}\int_0^s\int_{|x|<2R}|b-b_k|\sqrt{V_k(x)}\,d|\mu_t|\,dt,
$$
where $\widetilde{C}=(2C(U))^{-1}e^{C_2T/2}\max|\nabla\psi|$ does not depend on $k$.
Letting $k\to\infty$, we conclude that
$$
\int_{\mathbb{R}^d}\psi\,d\mu_s\le 0.
$$
Recall that $\psi$ was an arbitrary function in $C^{\infty}_0(\mathbb{R}^d)$.
Then $\mu_s=\mu_s^1-\mu_s^2=0$ for almost all~$s$.
\end{proof}

We also give a result without restrictions on the growth
of $b_k$, but with some additional conditions on the solution.

\begin{theorem}\label{th2}
Let $\mu^1=\mu^1(dx)\,dt$ and $\mu^2=\mu^2(dx)\,dt$ be two solutions to the
Cauchy problem~{\rm (\ref{e1})}.
Assume that the measure $\mu=\mu_t(dx)\,dt$, where $\mu_t=\mu_t^1-\mu_t^2$,
satisfies the following condition{\rm:}

\, {\rm (i)} \,
$\displaystyle\lim\limits_{N\to\infty}N^{-1}\int_0^T\int_{N<|x|<2N}
|b(x, t)|\, |\mu_t|(dx)\,dt=0,$

\, {\rm (ii)} \, for every ball $U\subset\mathbb{R}^d$, one can find
numbers $C>0$ and $\delta>0$,
a sequence of vector fields $b_k\in C^{\infty}(U\times\mathbb{R}^1,\mathbb{R}^d)$ and
a sequence of positive functions $V_k\in C^1(U)$ {\rm(}all depending on the considered measure $\mu$
and the ball $U${\rm)} such that
$$
\inf_k\inf_{U}V_k(x)>0,
$$
$$
\langle b_k(x, t), \nabla V_k(x)\rangle \le
\Bigl(C-\delta|b_k(x, t)|^2
-2\max_{|\xi|=1}\langle\mathcal{B}_k(x, t)\xi, \xi\rangle\Bigr)V_k(x)
$$
for every $k$ and all $(x, t)\in U\times[0, T]$,
where $\mathcal{B}_k=(\partial_{x_i}b^j_k)_{i,j\le d}$,

\, {\rm (iii)} \, $\lim\limits_{k\to\infty}\|(b_k-b)\sqrt{V_k}\|_{L^1(|\mu_t|\,dt, \,U\times[0, T])}=0$.

Then $\mu=0$, i.e., $\mu^1=\mu^2$.
\end{theorem}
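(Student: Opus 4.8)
The plan is to run the argument of Theorem~\ref{th1} \emph{locally in $x$}: since we no longer bound the growth of the $b_k$, the solution $f$ of the dual transport equation need not have compact support with a $k$-independent radius, so I would cut off with a function $\chi_N$ supported in a ball $B_{2N}:=\{|x|<2N\}$ and absorb the commutator error by hypothesis~(i), while the main term is treated as in Theorem~\ref{th1} once a \emph{localized} gradient estimate for $f$ is available. As there, it suffices to fix $\psi\in C^\infty_0(\mathbb R^d)$ with $\operatorname{supp}\psi\subset B_{r_0}$ and $s$ in the common full-measure set $S$, and to prove $\int\psi\,d\mu_s\le0$ (then apply this to $\pm\psi$). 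For $\mu=\mu^1-\mu^2$ the initial data cancel, so~(\ref{ee1}) reduces to $\int u(\cdot,t)\,d\mu_t=\int_0^t\!\int[\partial_t u+\langle b,\nabla u\rangle]\,d\mu_s\,ds$.

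Concretely: take $\chi_N(x)=\chi(x/N)$ with $\chi\equiv1$ near $\overline{B_1}$, $\operatorname{supp}\chi\subset B_2$, so $\operatorname{supp}\chi_N\subset B_{2N}$ and $|\nabla\chi_N|\le c_0/N$ on the open annulus $\{N<|x|<2N\}$; for large $N>r_0$ apply hypothesis~(ii) with $U=B_{8N}$ to get $b_k,V_k,C,\delta$; extend each $b_k$ to a smooth, compactly supported (hence bounded) field on $\mathbb R^d\times\mathbb R^1$ and let $f=f_{k,N}$ solve $\partial_t f+\langle b_k,\nabla f\rangle=0$, $f|_{t=s}=\psi$, via the characteristics of the extended field; then $f$ is smooth, $|f|\le\max|\psi|$, and $f$ vanishes for $|x|$ large. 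Plugging the admissible test function $u=f\chi_N$ (defined on $[0,s]$, which is all that is needed at the level $t=s$) into~(\ref{ee1}) at $t=s$ and using the equation for $f$ gives
\[
\int\psi\,d\mu_s=\int_0^s\!\int\chi_N\langle b-b_k,\nabla f\rangle\,d\mu_t\,dt+\int_0^s\!\int f\langle b,\nabla\chi_N\rangle\,d\mu_t\,dt .
\]
The second term is $\le c_0\max|\psi|\cdot N^{-1}\!\int_0^T\!\int_{N<|x|<2N}|b|\,d|\mu_t|\,dt$, which tends to $0$ as $N\to\infty$ by~(i). For the first term I would prove a localized analogue of Lemma~\ref{l01}: a constant $\Lambda_N$ \emph{not depending on $k$} with $|\nabla f(x,t)|\le\Lambda_N\sqrt{V_k(x)}$ on $B_{2N}\times[0,s]$; then the first term is $\le\Lambda_N\|(b-b_k)\sqrt{V_k}\|_{L^1(|\mu_t|\,dt,\,B_{8N}\times[0,T])}\to0$ as $k\to\infty$ by~(iii). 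Thus, with $N$ (hence $\Lambda_N$, $C$, $\delta$, $B_{8N}$) frozen, first $k\to\infty$ and then $N\to\infty$ give $\int\psi\,d\mu_s\le0$. The order of these two limits is essential.

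To get the localized gradient bound I would imitate the proof of Lemma~\ref{l01}. Setting $w=|\nabla f|^2/(2V_k)$ and differentiating the transport equation yields $\partial_t w+\langle b_k,\nabla w\rangle+qw\ge0$ on $B_{8N}\times[0,s]$ with (using~(ii) and $\mathcal B_k$ in place of $\mathcal H$) $q=2\sup_{|\xi|=1}\langle\mathcal B_k\xi,\xi\rangle+\langle b_k,\nabla V_k\rangle V_k^{-1}\le C-\delta|b_k|^2$; hence $g:=e^{Ct}w$ satisfies $\partial_t g+\langle b_k,\nabla g\rangle\ge\delta|b_k|^2g\ge0$ as long as the characteristic stays in $B_{8N}$. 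For $(x_0,t_0)\in B_{2N}\times[0,s]$, follow the forward characteristic of $b_k$. If it stays in $B_{8N}$ up to time $s$, then $g$ is non-decreasing along it and $w(x_0,t_0)\le e^{Cs}\max|\nabla\psi|^2/(2\,\mathrm c_N)=:K_N$, where $\mathrm c_N=\inf_k\inf_{B_{8N}}V_k>0$ by~(ii) — free of $k$. If instead the characteristic exits $B_{8N}$ at a first time $\tau<s$, then $|x(\tau)-x_0|\gtrsim N$, so $\int_{t_0}^\tau|b_k|^2\,dt\gtrsim N^2/T$ by Cauchy–Schwarz; integrating $\tfrac{d}{dt}\ln g\ge\delta|b_k|^2$ over $[t_0,\tau]$ shows that $g(x_0,t_0)$ is smaller than $g(x(\tau),\tau)$ by a factor $e^{-c\delta N^2/T}$, whence $w(x_0,t_0)\le e^{CT}e^{-c\delta N^2/T}\mathcal W$ with $\mathcal W:=\sup_{\overline{B_{8N}}\times[0,s]}w<\infty$ (finite since $f$ is smooth and compactly supported). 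Thus $w\le\max(K_N,\,e^{CT}e^{-c\delta N^2/T}\mathcal W)$ on $B_{2N}\times[0,s]$.

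The one remaining point — and the step I expect to be the main obstacle — is to turn this into a usable bound, i.e. to control $\mathcal W$ itself (which a priori depends on $k$) by a multiple of $K_N$. By the subsolution property of $g$, the supremum defining $\mathcal W$ is attained on $\{t=s\}$, where $w\le K_N$, or on $\partial B_{8N}\times[0,s]$, and it is the latter contribution that must be handled without circularity. I expect this to require, once more, the damping produced by the $-\delta|b_k|^2$ term — a characteristic emanating from $\partial B_{8N}$ and relevant to $f$ must travel all the way back to $\operatorname{supp}\psi\subset B_{r_0}$ and so carries a large $\int|b_k|^2$ — together with the compact support of $f$; if necessary one should arrange the extension of $b_k$ off $B_{8N}$ so that the inequality in~(ii) survives, with the \emph{same} constant $C$, on a slightly larger ball (legitimate since off $B_{8N}$ one only needs $V_k>0$, not bounded below). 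Once $\mathcal W\le2K_N$ is secured for all $N$ large enough that $e^{CT}e^{-c\delta N^2/T}\le\tfrac12$, the localized gradient estimate holds with $\Lambda_N=(2K_N)^{1/2}$ and the proof is finished.
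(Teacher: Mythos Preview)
Your overall strategy --- localize with a cutoff $\chi_N$, kill the boundary term $\int f\langle b,\nabla\chi_N\rangle\,d\mu_t\,dt$ via hypothesis~(i), and control the main term by a $k$-independent gradient bound $|\nabla f|\le\Lambda_N\sqrt{V_k}$ on $B_{2N}$ --- is exactly the paper's. The gap is in how you obtain that gradient bound. Extending $b_k$ off $B_{8N}$ and hoping the damping $e^{-c\delta N^2/T}$ swallows the boundary value $\mathcal W=\sup_{\overline{B_{8N}}\times[0,s]}w$ does not close: $\mathcal W$ depends on $k$ through the uncontrolled extension (characteristics from $\partial B_{8N}$ may wander outside $B_{8N}$, where neither (ii) nor even $V_k$ is available, before returning to $\operatorname{supp}\psi$), and ``extend so that (ii) survives on a larger ball'' only pushes the same boundary problem outward --- the circularity you flag is real and is not resolved by the argument you sketch.

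The paper's device avoids the boundary term $\mathcal W$ altogether by localizing the \emph{transport equation itself}: one solves
\[
\partial_t f+\zeta\,\langle b_k,\nabla f\rangle=0,\qquad f|_{t=s}=\psi,
\]
with a cutoff $\zeta=\varphi_{2N}$ supported in $U=B_{4N}$ and satisfying $|\nabla\zeta|^2/\zeta\le C(\zeta)$. Since the drift $\zeta b_k$ is smooth and compactly supported in $U$, characteristics never leave $\overline{\operatorname{supp}\zeta}$, so the maximum-principle computation for $w=|\nabla f|^2/(2V_k)$ is run globally with no lateral boundary to worry about. Differentiating the equation now produces an extra term $\langle\nabla\zeta,\nabla f\rangle\langle b_k,\nabla f\rangle$, and \emph{this} is precisely where the $-\delta|b_k|^2$ in (ii) is spent: by Young's inequality
\[
2|\nabla\zeta|\,|b_k|\le \delta^{-1}\,\frac{|\nabla\zeta|^2}{\zeta}+\delta\,\zeta\,|b_k|^2\le \delta^{-1}C(\zeta)+\delta\,\zeta\,|b_k|^2,
\]
so the coefficient $q$ in $\partial_t w+\zeta\langle b_k,\nabla w\rangle+qw\ge0$ is bounded by $M=C+\delta^{-1}C(\zeta)$, uniformly in $k$. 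This gives $|\nabla f|^2\le V_k\,e^{M(s-t)}\max_y|\nabla\psi(y)|^2/V_k(y)$ directly, with no $\mathcal W$. One then uses a second, tighter cutoff $\varphi_N$ (with $\varphi_{2N}\equiv1$ on $\operatorname{supp}\varphi_N$) as the test-function cutoff, so that on $\operatorname{supp}\varphi_N$ the modified equation reduces to $\partial_t f+\langle b_k,\nabla f\rangle=0$ and the algebra after substituting $u=f\varphi_N$ into~(\ref{ee1}) is identical to yours. In short: you were using $-\delta|b_k|^2$ as long-range damping along characteristics; the paper uses it locally, to absorb the commutator with the cutoff inserted into the equation.
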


\begin{remark}\rm
Note that if the sequence of functions $|b_k|$  on $U\times[0, T]$
is uniformly bounded, by making $\delta$ smaller and~$C$ larger
we can restate the second estimate in (i) as
$$
\langle b_k(x, t), \nabla V_k(x)\rangle \le
\Bigl(C-2\max_{|\xi|=1}\langle\mathcal{B}_k(x, t)\xi, \xi\rangle\Bigr)V_k(x).
$$
It is worth noting that $V_k$ looks like a so-called Lyapunov function, but there is an important difference:
its sublevel sets need not be compact.
\end{remark}

As above, the proof of Theorem \ref{th2}
is based on the maximum principle and Holmgren's principle.

Let $\zeta\in C^{\infty}_0(\mathbb{R}^d)$ be
such that $0\le\zeta\le 1$ and there exists a
number $C(\zeta)>0$ such that $|\nabla\zeta(x)|^2\zeta^{-1}(x)\le C(\zeta)$ for every $x$.
Let $U$ be a ball  containing the support of $\zeta$.

We need an analog of Lemma~\ref{l01}.

\begin{lemma}\label{l1}
Suppose that $h\in C^{\infty}(\mathbb{R}^d\times\mathbb{R}^1,\mathbb{R}^d)$ and that
for some positive function $V\in C^1(\mathbb{R}^d)$ and numbers
$C>0$, $\delta>0$
one has
$$
\langle h(x, t), \nabla V(x)\rangle\le
\Bigl(C-2\sup_{|\xi|=1}\langle\mathcal{H}(x, t)\xi, \xi\rangle-
\delta|h(x, t)|^2\Bigr)V(x),
$$
for all $(x, t)\in U\times[0, T]$, where
$\mathcal{H}(x, t)=(\partial_{x_i}h^j(x, t))_{1\le i, j\le d}$.

Then, for any $s\in(0, T)$, the Cauchy problem
$$
\partial_tf+\zeta\langle h, \nabla f\rangle=0, \quad f|_{t=s}=\psi,
$$
where $\psi\in C^{\infty}_0(\mathbb{R}^d)$,
has a smooth solution $f$ such that
$$
|f(x, t)|\le \max_y|\psi(y)|, \quad
|\nabla f(x, t)|^2\le V(x)e^{M(s-t)}\max_y\frac{|\nabla\psi(y)|^2}{V(y)},\quad t\in [0,s],
$$
where $M=C+\delta^{-1}C(\zeta)$.
\end{lemma}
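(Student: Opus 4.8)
The plan is to follow the same two-part scheme as in Lemma~\ref{l01}: first produce the solution $f$ by the method of characteristics and extract the sup-norm bound, then derive the gradient estimate via a maximum principle applied to the squared gradient. The new feature — the cutoff $\zeta$ in front of $\langle h,\nabla f\rangle$ — is precisely what removes the need for any growth restriction on $h$: since $\zeta$ has compact support, the characteristic ODE $\dot x(t)=\zeta(x(t))h(x(t),t)$ has a globally defined solution for every initial condition (the right-hand side is smooth and compactly supported, hence bounded), so $f(x_0,t_0):=\psi(x(s))$, where $x(t_0)=x_0$, is a well-defined smooth function on $\mathbb R^d\times(-\infty,s]$ solving $\partial_t f+\zeta\langle h,\nabla f\rangle=0$ with $f|_{t=s}=\psi$. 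The bound $|f(x,t)|\le\max_y|\psi(y)|$ is immediate since $f$ is constant along characteristics and equals a value of $\psi$ at time $s$.

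For the gradient estimate I would set $u=\tfrac12\sum_{k}|\partial_{x_k}f|^2$ and differentiate the transport equation in $x_k$, then multiply by $\partial_{x_k}f$ and sum. Writing $\tilde h:=\zeta h$, one gets
$$
\partial_t u+\langle \tilde h,\nabla u\rangle+\langle \widetilde{\mathcal H}\nabla f,\nabla f\rangle=0,
$$
where $\widetilde{\mathcal H}=(\partial_{x_i}\tilde h^j)=\zeta\mathcal H+(\nabla\zeta)\otimes h$. The extra term coming from the derivative of $\zeta$ is the only thing not present in Lemma~\ref{l01}: it contributes $\langle(\nabla\zeta\otimes h)\nabla f,\nabla f\rangle=\langle\nabla\zeta,\nabla f\rangle\langle h,\nabla f\rangle$, which by Cauchy--Schwarz and Young's inequality is bounded in absolute value by $\tfrac{\delta}{2}|h|^2|\nabla f|^2+\tfrac{1}{2\delta}|\nabla\zeta|^2|\nabla f|^2$; on the support of $\zeta$ we have $|\nabla\zeta|^2\le C(\zeta)\zeta\le C(\zeta)$, and off the support of $\zeta$ the term vanishes, so in all this extra contribution is at most $\bigl(\tfrac{\delta}{2}|h|^2+\tfrac{1}{2\delta}C(\zeta)\bigr)\cdot 2u$ (after also absorbing factors of $\zeta\le1$ into the $\zeta\mathcal H$ part as needed). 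Combining with $\langle\zeta\mathcal H\nabla f,\nabla f\rangle\ge -2u\,\zeta\sup_{|\xi|=1}\langle\mathcal H\xi,\xi\rangle\ge -2u\sup_{|\xi|=1}\langle\mathcal H\xi,\xi\rangle$ (using $0\le\zeta\le1$ and that we only care about the lower bound where the sup is positive) yields the differential inequality
$$
\partial_t u+\langle \tilde h,\nabla u\rangle+2u\sup_{|\xi|=1}\langle\mathcal H\xi,\xi\rangle+\delta|h|^2 u+\delta^{-1}C(\zeta)u\ge0.
$$

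Now substitute $u=wV$ exactly as in the previous lemma: the factor $V$ converts the $\langle \tilde h,\nabla V\rangle V^{-1}$ term into $\zeta\langle h,\nabla V\rangle V^{-1}$, and invoking the hypothesis on $V$ (valid on $U\supset\mathrm{supp}\,\zeta$, and the term being zero outside) gives $\zeta\langle h,\nabla V\rangle V^{-1}\le C-2\sup_{|\xi|=1}\langle\mathcal H\xi,\xi\rangle-\delta|h|^2$. Adding this to the $2\sup\langle\mathcal H\xi,\xi\rangle+\delta|h|^2+\delta^{-1}C(\zeta)$ already present, the troublesome $\sup\langle\mathcal H\xi,\xi\rangle$ and $\delta|h|^2$ terms cancel and we are left with
$$
\partial_t w+\langle \tilde h,\nabla w\rangle+qw\ge0,\qquad q\le C+\delta^{-1}C(\zeta)=M.
$$
Since $w(\cdot,s)\le\max_y|\nabla\psi(y)|^2/V(y)$, the maximum principle (\cite[Theorem~3.1.1]{StrV}) gives $w(x,t)\le e^{M(s-t)}\max_y|\nabla\psi(y)|^2/V(y)$ for $t\le s$, which is the claimed estimate.

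The main obstacle, and the only genuinely new point, is the careful bookkeeping of the $\nabla\zeta$ term: one must check that the $\delta|h|^2$ produced by Young's inequality is matched (not merely bounded) by the $\delta|h|^2$ available in the hypothesis on $V$, since $|h|$ is unbounded on $U$, while the leftover $\delta^{-1}C(\zeta)$ is a harmless constant. Everything else is a direct transcription of the proof of Lemma~\ref{l01}, with the global existence of characteristics now being free because $\zeta h$ is compactly supported rather than requiring the linear growth bound.
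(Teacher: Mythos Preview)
Your overall strategy is the paper's, and the existence/sup-norm part is fine. The gradient estimate, however, has a bookkeeping error with the $\zeta$ factors that breaks the final cancellation.

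You drop the factor $\zeta$ in two places where it must be retained. First, in the Young step you bound the cross term by $(\delta|h|^2+\delta^{-1}C(\zeta))u$; the paper instead uses
$2|\nabla\zeta|\,|h|\le \delta^{-1}|\nabla\zeta|^2/\zeta+\delta\zeta|h|^2\le \delta^{-1}C(\zeta)+\delta\zeta|h|^2$,
keeping a $\zeta$ in front of $|h|^2$. Second, you replace $2u\,\zeta\sup_{|\xi|=1}\langle\mathcal H\xi,\xi\rangle$ by $2u\sup_{|\xi|=1}\langle\mathcal H\xi,\xi\rangle$; this is not an upper bound when the sup is negative (and the inequality you wrote for $\langle\zeta\mathcal H\nabla f,\nabla f\rangle$ points the wrong way in any case---an upper bound is needed). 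After the substitution $u=wV$, the transport term necessarily produces $\zeta\langle h,\nabla V\rangle V^{-1}$, and the hypothesis only gives
\[
\zeta\langle h,\nabla V\rangle V^{-1}\le \zeta\bigl(C-2\sup_{|\xi|=1}\langle\mathcal H\xi,\xi\rangle-\delta|h|^2\bigr),
\]
not the $\zeta$-free bound you invoke. With your version one is left with
\[
q\le (1-\zeta)\Bigl(2\sup_{|\xi|=1}\langle\mathcal H\xi,\xi\rangle+\delta|h|^2\Bigr)+\zeta C+\delta^{-1}C(\zeta),
\]
and since $|h|$ is unbounded on the region $\{0<\zeta<1\}$ (there is no growth assumption on $h$ here---that is the whole point of the lemma), the first term is not controlled and $q\le M$ fails.

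The fix is exactly what the paper does: retain every $\zeta$. One obtains
\[
\partial_t u+\zeta\langle h,\nabla u\rangle+u\Bigl(2\zeta\sup_{|\xi|=1}\langle\mathcal H\xi,\xi\rangle+\delta\zeta|h|^2+\delta^{-1}C(\zeta)\Bigr)\ge0,
\]
and after $u=wV$ the coefficient is
\[
q=\zeta\Bigl(2\sup_{|\xi|=1}\langle\mathcal H\xi,\xi\rangle+\delta|h|^2+\langle h,\nabla V\rangle V^{-1}\Bigr)+\delta^{-1}C(\zeta)\le \zeta C+\delta^{-1}C(\zeta)\le M,
\]
the bracket being $\le C$ on $U$ by hypothesis and the whole $\zeta$-group vanishing off $\mathrm{supp}\,\zeta$. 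This is precisely the ``careful bookkeeping'' you flag as the crux: the $\delta|h|^2$ produced by Young and the $\delta|h|^2$ available in the hypothesis must carry the \emph{same} $\zeta$ weight for the cancellation to occur.
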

\begin{proof}
The reasoning is similar to that of Lemma~\ref{l01}, but we explain the necessary changes.
The existence of a smooth bounded solution with bounded derivatives is well known.
Set  $u=2^{-1}\sum_{k=1}^{d}|\partial_{x_{k}}f|^{2}$.
Differentiating the
equation $\partial_{t}f+\zeta\langle h, \nabla f\rangle=0$
with respect to $x_{k}$ and multiplying by
$\partial_{x_{k}}f$ we obtain
$$
\partial_{t}u+\zeta\langle h, \nabla u\rangle+\zeta\langle\mathcal{H}\nabla f,\nabla f\rangle+\langle\nabla\zeta,
\nabla f\rangle\langle h,\nabla f\rangle=0.
$$
Note that
$$
\langle\mathcal{H}\nabla f,\nabla f\rangle\le 2u\sup_{|\xi|=1}\langle\mathcal{H}(x, t)\xi, \xi\rangle,
\quad
\langle\nabla\zeta,\nabla f\rangle\langle h,\nabla f\rangle\le2|\nabla\zeta||h|u.
$$
Since
$$
2|\nabla\zeta||h|\le
\delta^{-1}\frac{|\nabla\zeta|^{2}}{\zeta_{M}}+
\delta\zeta|h|^{2}\le \delta^{-1}C(\zeta)+\delta\zeta|h|^2,
$$
we have
$$
\partial_tu+\zeta\langle h, \nabla u\rangle+
u\Bigl(2\zeta\sup_{|\xi|=1}\langle\mathcal{H}\xi, \xi\rangle+\delta\zeta|h|^2+\delta^{-1}C(\zeta)\Bigr)\ge 0.
$$
Set  $u=wV$. Then
$$
\partial_{t}w+\zeta\langle h, \nabla w\rangle+qw\ge0,
$$
where
$$
q=\zeta\Bigl(2\sup_{|\xi|=1}\langle\mathcal{H}\xi, \xi\rangle+\delta|h|^2
+\langle h, \nabla V\rangle V^{-1}\Bigr)+\delta^{-1}C(\zeta).
$$
By our assumptions  $q\leq C+\delta^{-1}C(\zeta)=M$.
Note that $w(x, s)\le\max_x |\nabla\psi(x)|^2/V(x)$.
Then again the maximum principle (see
\cite[Theorem 3.1.1]{StrV}) yields that $w(x, t)\le e^{M(s-t)}\max_y |\nabla\psi(y)|^2/V(y)$,
which completes the proof.
\end{proof}

\begin{proof}[Proof of Theorem \ref{th2}]
Let us fix a function $\varphi\in C^{\infty}_0(\mathbb{R}^d)$ such that
$0\le\varphi\le 1$, $\varphi(x)=1$ if $|x|<1$ and $\varphi(x)=0$ if $|x|>2$.
Assume also that $|\nabla\varphi(x)|^2\varphi(x)^{-1}\le C(\varphi)$ for every $x$.
Set $\varphi_{4N}(x)=\varphi(x/N)$.
For every ball $U_{4N}=\{x\colon |x|<4N\}$, there exist numbers $C>0$,
 $\delta>0$ and sequences $\{b_k\}$, $\{V_k\}$ such that all conditions
(ii) and (iii) are fulfilled on $U_{4N}$.
Let us fix $\psi\in C^{\infty}_0(\mathbb{R}^d)$ and $s\in(0, T)$ in the same full measure set $S$
as in the proof of~Theorem~\ref{th1}.
According to Lemma \ref{l1} there exists a smooth
solution $f$ to the Cauchy problem
$$
\partial_tf+\varphi_{2N}\langle b_k, \nabla f\rangle=0, \quad f|_{t=s}=\psi
$$
satisfying the estimate
$$
|f(x, t)|\le\max_y |\psi(y)|, \quad |\nabla f(x, t)|^2\le V_k(x)e^{M(s-t)}\max_y\frac{|\nabla\psi(y)|^2}{2V_k(y)},
$$
where $M=C+\delta^{-1}C(\varphi_{2N})$.
Certainly, $f$ depends on several parameters ($k$, $s$, $N$, $\psi$, etc.), which is suppressed in our notation.

Let $U$ be a ball containing the support of $\psi$.  By our assumptions, $C(U)=\inf_k\inf_{U}V_k(x)>0$, hence it follows that
$$
|\nabla f(x, t)|\le (2C(U))^{-1}e^{M(s-t)/2}\sqrt{V_k(x)}\max|\nabla\psi|.
$$
Substituting the function $u=f\varphi_N$ in  (\ref{ee1}) for the solution $\mu_t=\mu_t^1-\mu_t^2$
and noting that $\varphi_{2N}(x)=1$ if $\varphi_N(x)\neq 0$, we arrive at the following equality:
$$
\int_{\mathbb{R}^d}\psi\,d\mu_s=
\int_0^s\int_{\mathbb{R}^d}\Bigl[\varphi_N\langle b-b_k, \nabla f\rangle
+\langle b, \nabla\varphi_N\rangle f\Bigr]\,d\mu_t\,dt.
$$
Then we obtain
$$
\int_{\mathbb{R}^d}\psi\,d\mu_s\le
\int_0^s\int_{|x|<2N}\bigl[\widetilde{C}\varphi_N|b-b_k|\sqrt{V_k(x)}
+|b||\nabla\varphi_N|\bigr]\,d|\mu_t|\,dt,
$$
where $\widetilde{C}=(2C(U))^{-1}e^{MT/2}\max|\nabla\psi|$ does not depend on $k$.
Letting $k\to\infty$,
we conclude that
$$
\int_{\mathbb{R}^d}\psi\,d\mu_s\le
\int_0^s\int_{|x|<2N}|b||\nabla\varphi_N|\,d|\mu_t|\,dt.
$$
Since $|\nabla\varphi_N|\le N^{-1}\max|\nabla\varphi|$
and $\nabla\varphi_N|$ vanishes outside of the set $\{N<|x|<2N\}$, we have
$$
\int_{\mathbb{R}^d}\psi\,d\mu_s\le
N^{-1}\max|\nabla\varphi|\int_0^s\int_{N<|x|<2N}|b|\,d|\mu_t|\,dt.
$$
Letting $N\to\infty$, we obtain that
$$
\int_{\mathbb{R}^d}\psi\,d\mu_s\le 0,
$$
which means that $\mu_s=\mu_s^1-\mu_s^2=0$ for almost all~$s$.
\end{proof}

Thus, the main difficulty is to construct the required approximations.

\section{Corollaries and examples}

In this section we obtain some corollaries of the main result and consider some
examples. Our first corollary generalizes a result of~\cite{Cr} by omitting
restrictions on the structure of the boundary of the zero set of the field.

\begin{corollary}\label{col1}
Let $d=1$, $b\in C(\mathbb{R}^1)$ and $0\le b(x)\le C+C|x|$ for every $x\in\mathbb{R}^1$ and some number $C>0$.
Then the corresponding Cauchy problem {\rm (\ref{e1})} has at most one solution in the class of all
locally bounded {\rm(}possibly, signed{\rm)} measures $\mu$ on $\mathbb{R}^d\times[0, T]$
given by families of locally bounded measures $(\mu_t)_{t\in[0, T]}$  such that
\begin{equation}\label{p-b-c}
|\mu_t|(\partial Z)=0 \quad \hbox{\rm for almost all} \quad t\in[0, T],
\end{equation}
where $\partial Z$ is the boundary of the set $Z=\{x\colon\ b(x)=0\}$.
In particular, the latter holds for absolutely continuous $\mu_t$ provided that $b^{-1}(0)$ has Lebesgue
measure zero.
\end{corollary}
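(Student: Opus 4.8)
The plan is to deduce the corollary from Theorem \ref{th1}, applied to any two solutions $\mu^1=\mu^1_t(dx)\,dt$ and $\mu^2=\mu^2_t(dx)\,dt$ lying in the indicated class, with $\mu_t=\mu_t^1-\mu_t^2$. The idea is to choose the approximations of a very rigid shape: for a fixed ball $U$, take $V_k=b_k^{-2}$. In dimension one $\mathcal{B}_k=b_k'$ and $\max_{|\xi|=1}\langle\mathcal{B}_k\xi,\xi\rangle=b_k'$, so with $V_k=b_k^{-2}$ one has
$$
\langle b_k,\nabla V_k\rangle=b_kV_k'=-2b_k'b_k^{-2}=-2b_k'V_k,
$$
whence condition (ii) of Theorem \ref{th1} holds with $C_2=1$ regardless of how large $|b_k'|$ is — the $b_k'$-terms cancel exactly. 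Thus the only constraints left by (i) and (ii) are that $b_k$ be smooth and strictly positive, that $0\le b_k(x)\le C_1+C_1|x|$ with $C_1$ \emph{not depending on $U$}, and that $\inf_k\inf_UV_k>0$ (which will follow from the linear growth of $b_k$). Condition (iii) asks that $|b_k-b|\sqrt{V_k}=|b_k-b|/b_k$ be dominated in $L^1(|\mu_t|\,dt,\,U\times[0,T])$, and this is the reason for choosing $b_k$ so that $b_k\ge b$: then $|b_k-b|/b_k=1-b/b_k\in[0,1]$.

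Accordingly I would fix a standard mollifier $\rho_\varepsilon$ and set $b_k:=b*\rho_{1/k}+\psi_k$, where $\psi_k$ is an auxiliary smooth function with three properties: $\psi_k(x)\ge|b*\rho_{1/k}(x)-b(x)|$ for all $x$, $\psi_k\to0$ locally uniformly, and $\psi_k(x)\le 7C+1+2C|x|$ (a bound independent of $U$). A concrete choice is $\psi_k:=\Psi_k*\rho_{1/k}+1/k$ with $\Psi_k(x):=\sup_{|y-x|\le 1/k}|b*\rho_{1/k}(y)-b(y)|$: here $\Psi_k$ is continuous, so $\psi_k\in C^{\infty}$; averaging the supremum yields the first property; the bound $0\le b(x)\le C+C|x|$ gives $|b*\rho_{1/k}(y)-b(y)|\le 3C+2C|y|$ and hence the third; and $b*\rho_{1/k}\to b$ locally uniformly gives the second. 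One then checks that $b_k\in C^{\infty}(\mathbb{R}^1)$ (extended trivially in $t$), $b_k\ge b+1/k>0$ everywhere, $b_k\to b$ locally uniformly, and $0\le b_k(x)\le C_1+C_1|x|$ with $C_1=9C+1$ independent of $U$. Then $V_k:=b_k^{-2}\in C^{\infty}(\mathbb{R}^1)$ is positive, $\inf_k\inf_UV_k\ge\bigl(C_1+C_1\sup_{x\in U}|x|\bigr)^{-2}>0$, and conditions (i), (ii) hold.

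For (iii), put $g_k:=|b_k-b|\sqrt{V_k}=(b_k-b)/b_k=1-b/b_k$; since $0\le b<b_k$ we have $0\le g_k\le1$, and $g_k(x)\to0$ wherever $b(x)>0$ while $g_k(x)=1$ wherever $b(x)=0$. With $Z=\{b=0\}$, dominated convergence (the constant $1$ is $|\mu_t|\,dt$-integrable on $U\times[0,T]$ because $|\mu_t|(U)\in L^1[0,T]$) gives
$$
\lim_{k\to\infty}\|(b_k-b)\sqrt{V_k}\|_{L^1(|\mu_t|\,dt,\,U\times[0,T])}=\int_0^T|\mu_t|(Z\cap U)\,dt.
$$
So everything reduces to $|\mu_t|(Z)=0$ for a.e.\ $t$. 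Now $|\mu_t|(\partial Z)\le|\mu_t^1|(\partial Z)+|\mu_t^2|(\partial Z)=0$ for a.e.\ $t$ by hypothesis, so it suffices to prove $|\mu_t|(Z^{\circ})=0$ for a.e.\ $t$, where $Z^{\circ}$ is the interior of $Z$ (this is vacuous if $Z^{\circ}=\emptyset$). Here is the crux: since $b\equiv0$ on $Z^{\circ}$, for any $\varphi\in C^{\infty}_0(Z^{\circ})$ the $t$-independent function $u(x,t)=\varphi(x)$ is admissible in (\ref{ee1}) and satisfies $\langle b,\nabla\varphi\rangle\equiv0$ and $\partial_t\varphi\equiv0$, so (\ref{ee1}) becomes $\int\varphi\,d\mu_t^i=\int\varphi\,d\nu$ for a.e.\ $t$, $i=1,2$. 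Using the full-measure set $S\subset(0,T)$ mentioned before Theorem \ref{th1}, taken to work for both solutions, we get $\int\varphi\,d\mu_t=0$ for all $t\in S$ and all $\varphi\in C^{\infty}_0(Z^{\circ})$; hence the restriction of $\mu_t$ to $Z^{\circ}$ is the zero measure for every $t\in S$, i.e.\ $|\mu_t|(Z^{\circ})=0$ for a.e.\ $t$. Thus (iii) holds and Theorem \ref{th1} yields $\mu^1=\mu^2$. Finally, if $\mu_t^1,\mu_t^2$ are absolutely continuous and the Lebesgue measure of $b^{-1}(0)$ is zero, then $|\mu_t^i|(\partial Z)\le|\mu_t^i|(Z)=0$, so (\ref{p-b-c}) holds automatically and the above applies.

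The step I expect to be the main obstacle is the construction of $b_k$: one genuinely needs $b_k\ge b$ to keep $|b_k-b|/b_k$ bounded (plain mollification plus a constant fails, since near a sharp positive peak of $b$ the averaged value $b*\rho_{1/k}$ can be much smaller than $b$), and simultaneously the linear-growth constant in (i) must be independent of $U$, which forbids simply adding $\sup_U|b*\rho_{1/k}-b|$. The function $\psi_k$ reconciles both demands, and verifying its three properties is the technical heart of the argument; the conceptual heart is the ``freezing'' observation that two solutions with the same initial datum must agree on $Z^{\circ}$, which upgrades the hypothesis $|\mu_t|(\partial Z)=0$ to the effective hypothesis $|\mu_t|(Z)=0$.
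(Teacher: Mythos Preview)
Your proof is correct and follows exactly the paper's strategy: set $V_k=b_k^{-2}$ so that (ii) becomes the identity $b_kV_k'=-2b_k'V_k$, reduce (iii) to $|\mu_t|(Z)=0$, and dispose of the interior $Z^{\circ}$ by the ``freezing'' observation that both solutions restrict to $\nu$ on~$Z^{\circ}$.

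The only difference is your construction of $b_k$, which is more elaborate than necessary. The paper simply takes, for a fixed interval $U=[-N,N]$, the modulus of continuity $\omega$ of $b$ on $[-N-1,N+1]$ and sets $b_k=b*\varrho_{1/k}+\omega(1/k)$. On $U$ one then has $|b-b*\varrho_{1/k}|\le\omega(1/k)$, hence $|b-b_k|\le 2\omega(1/k)$ and $b_k\ge\omega(1/k)>0$, giving $|b-b_k|/b_k\le 2$ directly. Your worry that ``plain mollification plus a constant fails'' is misplaced: Theorem~\ref{th1} lets the sequence $b_k$ depend on $U$, so the constant may be $U$-dependent; and since $\omega(1/k)\to0$, discarding finitely many terms yields $|b_k(x)|\le C_1+C_1|x|$ with $C_1$ independent of~$U$. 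You do \emph{not} need $b_k\ge b$ globally, only the local bound $|b_k-b|\le 2(b_k-b*\varrho_{1/k})$ on $U$, which uniform continuity on compacts gives for free. Your global construction via $\psi_k$ works, but it solves a harder problem than the theorem actually poses.
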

\begin{proof}
Set $Z^{0}=\{x\colon\ b(x)=0\}\setminus\partial Z$ and suppose that $Z^{0}$ is not empty.
Obviously, the measure $\nu_0=\nu|_{Z^{0}}$ is a stationary solution to the
equation $\partial_t\mu_t+{\rm div}(b\mu_t)=0$ and,
for every solution $\mu_t$, we have $\mu_t=\nu_0$ on $Z^{0}$, because the term with ${\rm div}\,(b\mu)$
vanishes in the domain~$Z^{0}$.
Replacing $\mu_t$ by $\mu_t-\nu_0$ we can trivially
prove the uniqueness of solutions $\mu_t$ such that $\mu_t(Z^{0})=0$.

We now prove that for all locally bounded measures $\mu=\mu_t\,dt$ such that $|\mu_t|(\partial Z)=0$ for almost
all $t\in[0, T]$ there exist appropriate $b_k$ and $V_k$ such that all conditions (i)--(iii)
in Theorem \ref{th1} are fulfilled.

Let us fix an interval $[-N, N]$.
In order to apply Theorem \ref{th1}, one has to find  suitable
approximations for $b$ on $[-N, N]$.
Let $\omega$ be the modulus of continuity of $b$ on $[-1-N, N+1]$.
If $\omega(\delta)=0$ for some $\delta>0$, then $b=const$ and one can take $b_k=b$ and $V_k=1$.
Let $\omega(\delta)>0$ if $\delta>0$.
Let $k\ge 1$, $\varrho_{1/k}(x)=k\varrho(kx)$, where
$\varrho\in C^{\infty}_0(\mathbb{R}^{1})$, $\varrho\ge 0$ and $\|\varrho\|_{L^1}=1$.
We use the following approximations:
$$
b_{k}=b*\varrho_{1/k}+\omega(k^{-1}), \quad
V_k=b_k^{-2}.
$$
Let us verify  conditions (ii) and (iii)
(since (i) is obviously true for such $b_k$ with $C_1=2C$).
Since $b_k(x)\le \max_{[-1-N, N+1]}b(x)=C_N$ on $[-N, N]$,
we have $\inf_k\inf_{[-N, N]}V_k\ge (4C_N)^{-2}>0$.
Moreover, $b_kV_k'=-2b_k'V_k$. Thus, condition (ii) in Theorem \ref{th1} is fulfilled.

Set $g_k(x):=|b-b_k|\sqrt{V_k}=|b-b_k|b_k^{-1}$.
Note that if $b(x)=0$, then $g_k(x)=1$. If $b(x)\neq 0$, then $g_k(x)\to 0$.
Finally,
$$
g_k=\frac{|b-b_k|}{b_k}\le
\frac{2\omega(k^{-1})}{b*\varrho_{1/k}+\omega(k^{-1})}\le 2.
$$
This yields the equality
$$
\lim_{k\to\infty}\int_0^T\int_{-N}^{N} |b-b_k|\sqrt{V_k}\,d|\mu_t|\,dt=
\int_0^T\int_{-N}^N\chi_{\{b=0\}}(x)\,|\mu_t|(dx)\,dt=0.
$$
So condition (iii) is fulfilled.
\end{proof}

\begin{remark}\label{r2}\rm
The case of nonpositive $b$  can be reduced to the considered situation: it suffices
to replace  $x$  by $-x$
and $b(x)$ by $-b(-x)$.
By the way, there is no direct generalization of the last corollary to the case of a signed drift $b$.
Indeed, it fails even for $b(x)=x^{1/3}$.
But we can assert the uniqueness for solutions concentrated on the set where $b\ge 0$ (or $b\le 0$) and satisfying
 condition (\ref{p-b-c}). This is a trivial observation because such a solution satisfies
the equation with $b^{+}=b\wedge 0$ and we can apply Corollary \ref{col1}.
There is a more interesting case where one can extract a negative part of $b$ with some better regularity.
\end{remark}

\begin{corollary}\label{col2}
Let $d=1$.
Assume that $b\in C(\mathbb{R}^1)$, $|b(x)|\le C+C|x|$ and $b=g+f$, where $f\ge 0$, $g$ is a Lipschitzian function
with Lipschitz constant $\Lambda$, and
\begin{equation}\label{pn}
g(x)<0\Longrightarrow f(x)=0.
\end{equation}
Then the corresponding Cauchy problem {\rm (\ref{e1})}
has at most one solution in the class
of all locally bounded measures $\mu$
given by families of locally bounded measures $(\mu_t)_{t\in[0, T]}$
such that
\begin{equation*}
|\mu_t|(\partial Z_f)=0 \quad \hbox{\rm for almost all} \quad t\in[0, T],
\end{equation*}
where $\partial Z_f$ is the boundary of the set $Z_f=\{x\colon\ f(x)=0\}$.
\end{corollary}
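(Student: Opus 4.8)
I would deduce the corollary from Corollary \ref{col1} by removing the Lipschitzian part $g$ by means of the change of variables generated by its flow. Since $g$ is Lipschitz (constant $\Lambda$) and of linear growth, the equation $\dot y=g(y)$ has a flow $\Phi_t$, $t\in[0,T]$, and for each $t$ the map $\Phi_t$ is an increasing bi-Lipschitz homeomorphism of $\mathbb{R}^1$ with ${\rm Lip}(\Phi_t),{\rm Lip}(\Phi_t^{-1})\le e^{\Lambda T}$ and with $\Phi_t'$ bounded between $e^{-\Lambda T}$ and $e^{\Lambda T}$. Given two solutions $\mu^1,\mu^2$ of (\ref{e1}) in the stated class, set $\mu_t=\mu_t^1-\mu_t^2$ and $\lambda_t:=(\Phi_t^{-1})_{\#}\mu_t$, i.e. $\lambda_t(A)=\mu_t(\Phi_t(A))$. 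Testing (\ref{ee1}) for $\mu_t$ against functions of the form $u(x,t)=v(\Phi_t^{-1}(x),t)$ shows that $(\lambda_t)_{t\in[0,T]}$ is again of the admissible form and solves
$$
\partial_t\lambda+{\rm div}(c\,\lambda)=0,\qquad \lambda|_{t=0}=0,\qquad c(y,t):=\bigl(\Phi_t'(y)\bigr)^{-1}f\bigl(\Phi_t(y)\bigr),
$$
because along the characteristics $X=\Phi_t(Y)$ the part of the drift equal to $g$ is exactly cancelled.

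Now $c\ge 0$ (as $f\ge 0$ and $\Phi_t'>0$), $c$ is continuous, and $0\le c(y,t)\le C_1+C_1|y|$ for a suitable $C_1$ (from $|f(x)|\le C+C|x|$, $|\Phi_t(y)|\le e^{\Lambda T}|y|+{\rm const}$ and $(\Phi_t')^{-1}\le e^{\Lambda T}$). Moreover $\{y\colon c(y,t)=0\}=\Phi_t^{-1}(Z_f)$, hence, $\Phi_t^{-1}$ being a homeomorphism, $\partial\{y\colon c(y,t)=0\}=\Phi_t^{-1}(\partial Z_f)$; since $(\Phi_t^{-1})_{\#}|\mu_t|=|\lambda_t|$, the hypothesis yields $|\lambda_t|\bigl(\partial\{c(\cdot,t)=0\}\bigr)=|\mu_t|(\partial Z_f)=0$ for almost all $t$. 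Thus $\lambda_t$ falls precisely under the uniqueness class of Corollary \ref{col1} for the (now nonnegative, continuous, linearly growing) drift $c$, and the proof of that corollary --- which relies only on Theorem \ref{th1}, valid for time-dependent drifts, via the approximations $c_k(\cdot,t)=c(\cdot,t)*\varrho_{1/k}+\omega(1/k)$ (with $\omega$ the modulus of continuity of $c$ on the relevant compact set, uniform in $t$) and $V_k=c_k^{-2}$ --- applies with $t$ playing the role of a parameter. Hence $\lambda_t=0$ for almost all $t$, so $\mu_t=(\Phi_t)_{\#}\lambda_t=0$ for almost all $t$, i.e. $\mu^1=\mu^2$.

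The only genuine obstacle is the regularity of $\Phi_t$: the change-of-variables computation above, and the positivity and continuity of $\Phi_t'$, really require $g\in C^1$, whereas $g$ is merely Lipschitz. I would handle this by first mollifying $g$ to $g^{\varepsilon}=g*\varrho_{\varepsilon}\in C^{\infty}$ (still Lipschitz with constant $\Lambda$ and of linear growth), performing the reduction with the flow $\Phi^{\varepsilon}_t$ of $g^{\varepsilon}$; this replaces the drift of $\mu$ by $g^{\varepsilon}+f$ together with an error $g-g^{\varepsilon}$ tending to $0$ uniformly on compact sets, and letting $\varepsilon\to0$ recovers the assertion, all bounds being uniform in $\varepsilon$ because ${\rm Lip}(g^{\varepsilon})\le\Lambda$. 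A second, more computational route avoiding flows is to verify (i)--(iii) of Theorem \ref{th1} directly with $b_k=g*\varrho_{1/k}+f*\varrho_{1/k}+\omega(1/k)=:g_k+f_k$, choosing $V_k$ to coincide with $b_k^{-2}$ on $\{|b_k|\ge\omega(1/k)\}$ (where it is bounded, bounded away from $0$, and makes the left side of (ii) vanish) and extending it smoothly across the small set $\{|b_k|<\omega(1/k)\}$, on which necessarily $g_k<0$, so that $g<0$ near such points, $f$ vanishes there, and $f_k$ is essentially the constant $\omega(1/k)$ with $f_k'\approx0$; the delicate point of this alternative is precisely the estimate of (ii) on the transition region, and this is why I would prefer the first, flow-based argument.
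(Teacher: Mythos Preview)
Your flow-based strategy is conceptually attractive and genuinely different from the paper's direct construction, but it has two gaps that are not routine to close.

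\textbf{The interior of $Z_f$.} After your change of variables the drift $c(\cdot,t)$ vanishes on $\Phi_t^{-1}(Z_f)$, and the verification of condition~(iii) in Theorem~\ref{th1} via the approximations $c_k=c*\varrho_{1/k}+\omega(1/k)$, $V_k=c_k^{-2}$ yields
\[
\lim_{k\to\infty}\int_0^T\int_U |c-c_k|\sqrt{V_k}\,d|\lambda_t|\,dt=\int_0^T|\lambda_t|(\{c(\cdot,t)=0\})\,dt=\int_0^T|\mu_t|(Z_f)\,dt,
\]
whereas the hypothesis only gives $|\mu_t|(\partial Z_f)=0$. In Corollary~\ref{col1} this discrepancy is removed by the observation that on the interior $Z^0$ the drift vanishes identically, so $\mu_t|_{Z^0}=\nu|_{Z^0}$ is forced. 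That argument does not transfer: in the new coordinates the zero set $\Phi_t^{-1}(Z_f)$ moves with~$t$, and in the original coordinates the drift on $\mathrm{int}(Z_f)$ is $g$, not~$0$. One would need to know that $\mu_t^1|_{\mathrm{int}(Z_f)}=\mu_t^2|_{\mathrm{int}(Z_f)}$, i.e.\ uniqueness for the continuity equation with Lipschitz drift $g$ on the \emph{open} set $\mathrm{int}(Z_f)$, which fails in general when characteristics enter through the boundary; nothing in the hypothesis $g<0\Rightarrow f=0$ forces the flow of $g$ to preserve $\mathrm{int}(Z_f)$.

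\textbf{The mollification of $g$.} Replacing $g$ by $g^{\varepsilon}$ does produce a smooth flow, but then the transformed drift is $c^{\varepsilon}+e^{\varepsilon}$ with $e^{\varepsilon}=(\Phi_t^{\varepsilon\prime})^{-1}(g-g^{\varepsilon})\circ\Phi_t^{\varepsilon}$. This error destroys the nonnegativity, so neither Corollary~\ref{col1} nor its construction applies; and absorbing $e^{\varepsilon}$ into condition~(iii) of Theorem~\ref{th1} would require $\sup_k\|\sqrt{V_k}\|_{L^1(|\lambda^{\varepsilon}|)}<\infty$, which is exactly what fails since $V_k=c_k^{-2}$ blows up on the zero set. ``Letting $\varepsilon\to0$'' does not obviously recover anything.

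The paper avoids both problems by a direct verification of (i)--(iii) in Theorem~\ref{th1}. The key device is an auxiliary $1$-Lipschitz function $h\ge 0$ with $h=0$ on $\{f>0\}$ and $\{h+f=0\}=\partial Z_f$; one then works with $\widetilde g=g-h$, $\widetilde f=f+h$, so that the zero set of $\widetilde f$ is precisely $\partial Z_f$ and the interior of $Z_f$ never enters. The approximations are $b_k=(\widetilde g+\widetilde f_k)*\varrho_{1/k}+\varepsilon_k$ with a carefully tuned $\varepsilon_k\sim k^{-1/2}$, and $V_k=(|\widetilde g|*\varrho_{1/k}+\widetilde f_k*\varrho_{1/k}+\varepsilon_k)^{-2}$; the ``transition region'' estimate you flag as delicate in your second route is handled by exploiting that wherever $(\widetilde g-|\widetilde g|)*\varrho_{1/k}\neq0$ one has $f=0$ nearby, so $\widetilde f_k=h$ there and its derivative is controlled by~$1$. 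This is indeed the computational heart of the argument, and your flow idea does not sidestep it.
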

\begin{proof}
As in the previous corollary, we prove that all conditions (i)--(iii) in Theorem \ref{th1}
are fulfilled for all locally bounded measures $\mu=\mu_t\,dt$ such that $|\mu_t|(\partial Z_f)=0$ for almost
all $t\in[0, T]$.

There exists a $1$-Lipschitzian function $h$ such that $h\ge 0$, $h(x)=0$ if $f(x)>0$, and
the set of zeros of $h+f$ coincides with $\partial Z_f$.
Let $\widetilde{g}=g-h$ and $\widetilde{f}=f+h$.
As above, we fix an interval $[-N, N]$ and find  suitable approximations of $b$ on $[-N, N]$.
Let $\omega$ be the modulus of continuity of $f$ on $[-1-N, N+1]$.
Let $k\ge 1$, $\varrho_{1/k}(x)=k\varrho(kx)$, where
$\varrho\in C^{\infty}_0(\mathbb{R}^{1})$, $\varrho\ge 0$, $\varrho(x)=0$ if $|x|<1$, and $\|\varrho\|_{L^1}=1$.
Set
$$
\varepsilon_k=8\Lambda k^{-1/2},
$$
$$
\widetilde{f}_k=(f-2\omega(3/k))^{+}+h, \
b_k=(\widetilde{g}+\widetilde{f}_k)*\varrho_{1/k}+\varepsilon_k, \
V_k=(|\widetilde{g}|*\varrho_{1/k}+\widetilde{f}_k*\varrho_{1/k}+\varepsilon_k)^{-2}.
$$
Let us verify condition (ii).
We shall prove that
\begin{equation}\label{eV}
b_kV_k'\le -2(-C+b_k')V_k
\end{equation}
for some $C>0$. Since $g$ and $|g|$ are Lipschitzian functions,
we can replace  $b_k'$ in the latter inequality
by the $|\widetilde{g}|*\varrho_{1/k}'+\widetilde{f}_k*\varrho_{1/k}'$.
Note that
$$
b_kV_k'=
-2V_k\frac{(\widetilde{g}*\varrho_{1/k}+\widetilde{f}_k*\varrho_{1/k}+\varepsilon_k)
(|\widetilde{g}|*\varrho_{1/k}'+\widetilde{f}_k*\varrho_{1/k}')}
{|\widetilde{g}|*\varrho_{1/k}+\widetilde{f}_k*\varrho_{1/k}+\varepsilon_k}.
$$
In order to prove (\ref{eV}) it is enough to show that
$$
\frac{((\widetilde{g}-|\widetilde{g}|)*\varrho_{1/k})
(|\widetilde{g}|*\varrho_{1/k}'+\widetilde{f}_k*\varrho_{1/k}')}
{|\widetilde{g}|*\varrho_{1/k}+\widetilde{f}_k*\varrho_{1/k}+\varepsilon_k}\ge -\gamma
$$
for some $\gamma>0$.
If $(\widetilde{g}-|\widetilde{g}|)*\varrho_{1/k}(x)\neq 0$, then
there is a point $z$
in the interval $(x-k^{-1}, x+k^{-1})$
such that $\widetilde{g}(z)=g(z)-h(z)<0$.
If $g(z)<0$ or $h(z)>0$, then $f(z)=0$ and $(f-2\omega(3/k))^{+}(x)=0$  on the interval
$(z-3k^{-1}, z+3k^{-1})$. In particular, $\widetilde{f}_k=h$ on $(x-k^{-1}, x+k^{-1})$
and
$$
|\widetilde{f}_k*\varrho_{1/k}'(x)|=|h*\varrho_{1/k}'(x)|\le 1.
$$
We use here that $h$ is a 1-Lipschitzian function.
Moreover, for every $y\in(x-k^{-1}, x+k^{-1})$ we have
$$
|\widetilde{g}(y)|\ge |\widetilde{g}(z)|-2\Lambda k^{-1}\ge
2^{-1}(|\widetilde{g}|-\widetilde{g})(y)-4\Lambda k^{-1}.
$$
Thus, $|\widetilde{g}|*\varrho_{1/k}(x)\ge 2^{-1}(|\widetilde{g}|-\widetilde{g})*\varrho_{1/k}(x)-4\Lambda k^{-1}$ and
$$
|\widetilde{g}|*\varrho_{1/k}(x)
+\widetilde{f}_k*\varrho_{1/k}(x)+\varepsilon_k\ge 2^{-1}(|\widetilde{g}|-\widetilde{g})*\varrho_{1/k}(x),
$$
because $\varepsilon_k-4\Lambda k^{-1}>0$.
Therefore,  we have
$$
\Biggl|\frac{((\widetilde{g}-|\widetilde{g}|)*\varrho_{1/k})
(|\widetilde{g}|*\varrho_{1/k}'+\widetilde{f}_k*\varrho_{1/k}')}
{|\widetilde{g}|*\varrho_{1/k}+\widetilde{f}_k*\varrho_{1/k}+\varepsilon_k}\Biggr|\le 2(\Lambda+1).
$$
Thus, condition (ii) is fulfilled.
Let us verify condition (iii).
We have
$$
|b-b_k|\sqrt{V_k}=
\frac{|\widetilde{g}*\varrho_{1/k}-\widetilde g+\widetilde{f}_k*\varrho_{1/k}-\widetilde{f}+\varepsilon_k|}
{|\widetilde{g}|*\varrho_{1/k}+\widetilde{f}_k*\varrho_{1/k}+\varepsilon_k}.
$$
If $\widetilde{f}(x)=0$, then $\widetilde{f}_k*\varrho_{1/k}(x)=0$ and
$$
-\varepsilon_k^{-1}\Lambda k^{-1}+1\le |b(x)-b_k(x)|\sqrt{V_k(x)}\le \varepsilon_k^{-1}\Lambda k^{-1}+1.
$$
Since $\varepsilon_k^{-1}\Lambda k^{-1}\to 0$, we have $|b(x)-b_k(x)|\sqrt{V_k(x)}\to 1$.
If $\widetilde{f}(x)\neq 0$, then $|b-b_k|\sqrt{V_k}\to 0$. Thus, we obtain
$$
\lim_{k\to\infty}\int_0^T\int_{-N}^N |b-b_k|\sqrt{V_k}\,d|\mu_t|\,dt=
\int_0^T\int_{-N}^N\chi_{\{\widetilde{f}=0\}}(x)\,|\mu_t|(dx)\,dt=0
$$
and condition (iii) is fulfilled as well.
\end{proof}

\begin{remark}\rm
(i) In the same way one can prove the above result under the
assumptions that $b=g+f$, where
$$
(g(x+y)-g(x))y\le C|y|^2
$$
and $f$ is an increasing nonnegative function that  is   constant on the set $\{x\colon\ g(x)<0\}$.

(ii)
The same assertion is true in the case, where $b=g-f$,
$g$ is a Lipschitzian function and $f$ satisfies all conditions of the last corollary.

(iii)
According to Corollaries \ref{col1} and \ref{col2} the uniqueness holds in the class of solutions
with the following property: ``you must go if you can''.
\end{remark}

Let us now consider the case $d\ge 2$. First we mention a known example of uniqueness
(see, e.g.,~\cite{AB}), in which it is easy to check our hypotheses.

\begin{example}\label{col3}
\rm
Assume that $b\in C(\mathbb{R}^d\times\mathbb{R}^1, \mathbb{R}^d)$ and there exist
numbers $C_1$, $C_2$ and $C_3$ such that
$$
|b(x, t)|\le C_1+C_2|x|, \quad
\langle b(x+\xi, t)-b(x, t), \xi\rangle\le C_3|\xi|^2
$$
for all $(x, t)\in \mathbb{R}^d\times[0, T]$ and every $\xi\in\mathbb{R}^d$.
Then there exists at most one solution to (\ref{e1}) in the class
of all locally bounded measures $\mu$
given by families of locally bounded measures $(\mu_t)_{t\in[0, T]}$.
\end{example}
\begin{proof}
All hypotheses of Theorem \ref{th1} are fulfilled with the functions
$V_k=1$ and $b_k=b*\varrho_{1/k}$, where $\varrho_{1/k}=k^d\varrho(kx)$ and
$\varrho\in C^{\infty}_0(\mathbb{R}^{d+1})$, $\varrho\ge 0$, $\|\varrho\|_{L^1}=1$.
\end{proof}

Let us consider a more specific situation, where
$$
b(x)=-\beta(|x|^2)x
$$
is a radially symmetric vector field.

\begin{example}\label{col4}
\rm
Assume that $\beta\ge 0$ is bounded and continuous on $[0, +\infty)$ and that
for every interval $[0, N]$ there exists a number $\Lambda_N>0$ such that
$s\mapsto \beta(s)-\Lambda_Ns$ is a decreasing function on $[0, N]$.
Then the Cauchy problem {\rm (\ref{e1})} with $b(x)=-\beta(|x|^2)x$
has at most one solution in the class
of all locally bounded measures $\mu$ given by families of locally bounded
measures $(\mu_t)_{t\in[0, T]}$ such that
\begin{equation*}
|\mu_t|(\partial Z\setminus\{0\})=0 \quad \hbox{\rm for almost all} \quad t\in[0, T],
\end{equation*}
where $\partial Z$ is the boundary of the set $Z=\{x\colon\ \beta(|x|^2)=0\}$.
\end{example}
\begin{proof}
Let us fix a natural number $N$.
Let $\omega$ be the modulus of continuity of $\beta$ on $[0, N+1]$.
If $\omega(\delta)=0$ for some $\delta>0$, then $\beta=const$ on $[0, N]$
and one can take $b_k=b$ and $V_k=1$.
Let us consider the case where $\omega(\delta)>0$ if $\delta>0$.
Applying the
reasoning from the proof of Corollary~\ref{col1},
we arrive at the case where $\mu_t=0$ on the set $\{x\colon\ \beta(|x|^2)=0\}\setminus\partial Z$.
Let
$$
b_k(x)=-(\beta_k(|x|^2)+\omega(k^{-1}))x,
\quad
V_k(x)=(\beta_k(|x|^2)+\omega(k^{-1}))^{-2},
$$
where $\beta_k(z)=\beta*\varrho_{1/k}$.
Then
$$
\langle b_k(x),\nabla V_k(x)\rangle=4|x|^2\beta_k'(|x|^2)V_k(x)
$$
and
\begin{align*}
\sum_{i,j}\partial_{x_i}b_k^j(x)h_ih_j &=
-2\beta_k'(|x|^2)\sum_{i,j}x_ih_ix_jh_j-\bigl(\beta_k(|x|^2)+\omega(k^{-1})\bigr)|h|^2
\\
&\le-2(\beta_k'(|x|^2)\wedge 0)|x|^2|h|^2.
\end{align*}
Note also that $\beta_k'(z)\le \Lambda_{N+1}$ if $z\in[0, N]$. Hence
$4|x|^2\beta_k'(|x|^2)\le 4N\Lambda_{N+1}$ and
$$
\langle b_k(x),\nabla V_k(x)\rangle\le
\Bigl(4N\Lambda_{N+1}+4(\beta_k'(|x|^2)\wedge 0)|x|^2\Bigr)V_k(x)
$$
if $|x|\le\sqrt{N}$. Thus, condition (ii) in the definition
of $\mathcal{M}_b$ required in Theorem \ref{th1} holds.
Let us verify condition~(iii). We have
$$
g_k(x)=|b_k(x)-b(x)|\sqrt{V_k(x)}=
\frac{|\beta_k(|x|^2)-\beta(|x|^2)+\omega(k^{-1})||x|}{\beta_k(|x|^2)+\omega(k^{-1})}.
$$
Note that  if  $x\neq 0$ and $\beta(|x|^2)=0$, then $g_k(x)=1$. If $\beta(|x|^2)\neq 0$, then $g_k(x)\to 0$.
Since $g_k(x)\le 2|x|$, we obtain the equality
$$
\lim_{k\to\infty}\int_0^T\int_{|x|\le\sqrt{N}}g_k(x)\,|\mu_t|(dx)\,dt
=\int_0^T\int_{|x|\le\sqrt{N}}\chi_{\{\beta(|x|^2)=0\}}\,|\mu_t|(dx)\,dt=0.
$$
Thus, condition (iii) is fulfilled as well.
\end{proof}

\begin{remark}\rm
In the same way one can consider a more general situation:
$$b(x)=-\beta(W(x))\nabla W(x),$$
where $W$ is a smooth bounded function on $\mathbb{R}^d$ with bounded derivatives
and $\beta$ is a nonnegative continuous function on $\mathbb{R}^1$ and
such that $s\mapsto \beta(s)-\Lambda s$ is a decreasing function for some number $\Lambda>0$.
Then the uniqueness holds in the class of all
bounded measures $\mu$ given by families of bounded measures $(\mu_t)_{t\in[0, T]}$
such that
$$
|\mu_t|(\partial Z\setminus \mathcal{W})=0 \quad \hbox{\rm for almost all} \quad t\in[0, T],
$$
where $\partial Z$ is the boundary of the set $Z=\{x\colon\ \beta(W(x))=0\}$ and
$\mathcal{W}=\{x\colon\ |\nabla W(x)|=0\}$.

Indeed, we can take
$$
b_k(x)=-(\beta_k(W(x))+\omega(k^{-1}))\nabla W(x) \quad {\rm and} \quad
V_k(x)=(\beta_k(W(x))+\omega(k^{-1}))^{-2},
$$
where $\beta_k(z)=\beta*\varrho_{1/k}$, and repeat the reasoning
from the proof of the previous example.
\end{remark}

Finally, let us discuss the existence of solutions possessing our uniqueness properties.

We observe that not every approximation of $b$ yields a solution from
the considered uniqueness class. Indeed, let $b(0)=0$ and take the approximations by smooth
functions $b_k$ such that $b_k(0)=0$. Then for $\nu=\delta_0$ we obtain  the unique solution $\mu_t^k=\delta_0$
for every $k$. So, the limit measure of such $\mu_t^k$ is again~$\delta_0$.

\begin{proposition}\label{pr1}
Assume that  $b\colon\, \mathbb{R}^d\times[0, T]\to\mathbb{R}^d$ is  continuous.
Suppose that there exist a sequence of vector fields
$b_k\in C^{\infty}(\mathbb{R}^d\times\mathbb{R}^1,\mathbb{R}^d)$ and a sequence of
positive functions $V_k\in C^1(\mathbb{R}^d)$ such that

\, {\rm (i)} \,
there exist numbers $C_1$, $C_2$ and $C_3$ such that  $V_k\le C_1V_m+C_2$ for every $k\le m$ and
$$
\langle b_k(x, t), \nabla V_k(x)\rangle \le C_3V_k(x)
\quad \forall\, (x, t)\in\mathbb{R}^d\times[0, T],
$$

\, {\rm (ii)} \,
the function $(1+|x|)^{-1}|b_k(x, t)|$ is bounded for every $k$ and $b_k\to b$ uniformly on $U\times[0, T]$ for every ball $U$.

Then, for every initial nonnegative finite measure $\nu$ such that $\sup_k\|V_k\|_{L^1(\nu)}<\infty$,
there exists a family of nonnegative finite measures $(\mu_t)_{t\in[0, T]}$
solving the Cauchy problem {\rm(\ref{e1})}
with the following property{\rm:}
for every ball $U$ one has
\begin{equation}\label{ex-m}
\lim_{k\to\infty}\int_0^T\int_{U}|b-b_{k}|\sqrt{V_{k}}\,d\mu_t\,dt=0.
\end{equation}
\end{proposition}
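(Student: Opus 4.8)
The plan is to build $\mu_t^k$ as the push-forward of $\nu$ along the flow of $b_k$ and then extract a limit. By~(ii) we have $|b_k(x,t)|\le c_k(1+|x|)$ for each $k$, so $\dot X=b_k(X,t)$ generates a global smooth flow $X^k_{0,t}$, and $\mu_t^k:=(X^k_{0,t})_{\#}\nu$ is a family of nonnegative measures with $\mu_t^k(\mathbb{R}^d)=\nu(\mathbb{R}^d)=:m_0$ that solves $\partial_t\mu_t^k+\mathrm{div}(b_k\mu_t^k)=0$, $\mu_0^k=\nu$, in the sense of~(\ref{ee1}) (this is the classical solution; one checks~(\ref{ee1}) by differentiating $t\mapsto\int u(X^k_{0,t}(x),t)\,\nu(dx)$ under the integral sign, using that $u$ has compact support in $x$ and $\nu$ is finite). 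Differentiating $t\mapsto V_k(X^k_{0,t}(x))$ along a trajectory and using $\langle b_k,\nabla V_k\rangle\le C_3V_k$ from~(i) gives $V_k(X^k_{0,t}(x))\le e^{C_3t}V_k(x)$, whence
\[
\sup_{t\in[0,T]}\int_{\mathbb{R}^d}V_k\,d\mu_t^k\le e^{C_3T}\|V_k\|_{L^1(\nu)}\le e^{C_3T}\sup_j\|V_j\|_{L^1(\nu)}=:M .
\]
Combining this with the comparison $V_k\le C_1V_m+C_2$ valid for $k\le m$ yields, for every fixed $k$, the bound $\sup_{m\ge k}\sup_{t\in[0,T]}\int_{\mathbb{R}^d}V_k\,d\mu_t^m\le C_1M+C_2m_0=:M'$.

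Next I would pass to the limit. For $\varphi\in C^{\infty}_0(\mathbb{R}^d)$ the functions $t\mapsto\int\varphi\,d\mu_t^k$ are equi-Lipschitz on $[0,T]$, since their $t$-derivatives equal $\int\langle b_k,\nabla\varphi\rangle\,d\mu_t^k$, bounded by $m_0\|\nabla\varphi\|_{\infty}\sup_k\sup_{(x,t)\in\mathrm{supp}\,\varphi\times[0,T]}|b_k(x,t)|$, which is finite because $b_k\to b$ uniformly on balls. Fixing a countable family in $C^{\infty}_0(\mathbb{R}^d)$ dense in the sup-norm on every ball, a diagonal argument together with the Arzel\`a--Ascoli theorem produces a subsequence (not relabelled) along which $t\mapsto\int\varphi\,d\mu_t^k$ converges uniformly on $[0,T]$ for every $\varphi\in C_0(\mathbb{R}^d)$. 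For each $t$ the limit is a nonnegative linear functional bounded by $m_0\|\varphi\|_{\infty}$, hence is represented by a nonnegative finite measure $\mu_t$ with $\mu_t(\mathbb{R}^d)\le m_0$; moreover $t\mapsto\mu_t(B)$ is measurable, $\mu_t$ is uniformly bounded (so the $L^1[0,T]$ requirement holds trivially), and $\mu_t^k\to\mu_t$ vaguely for every $t$. To see $(\mu_t)$ solves~(\ref{e1}), write~(\ref{ee1}) for $\mu^k$ with a test function $u\in C^{1,1}(\mathbb{R}^d\times[0,T))$ vanishing for $|x|>R$ and let $k\to\infty$: the left side and $\int u(\cdot,0)\,d\nu$ pass trivially, while in $\int_0^t\int[\partial_tu+\langle b_k,\nabla u\rangle]\,d\mu_s^k\,ds$ one splits $\langle b_k,\nabla u\rangle=\langle b_k-b,\nabla u\rangle+\langle b,\nabla u\rangle$, bounds the first term by $\|b_k-b\|_{L^{\infty}(\{|x|\le R\}\times[0,T])}\|\nabla u\|_{\infty}m_0\to0$, uses vague convergence for the continuous, $x$-compactly supported integrand $\partial_tu+\langle b,\nabla u\rangle$, and applies dominated convergence in $s$ (the integrands being uniformly bounded). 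Since $\mu$ is finite and $b$ is locally bounded, $b\in L^1(|\mu|,U\times[0,T])$, so $(\mu_t)$ is a genuine solution.

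It remains to verify~(\ref{ex-m}). Fix a ball $U$. By lower semicontinuity of $\rho\mapsto\int V_k\,d\rho$ under vague convergence ($V_k\ge0$ being continuous) and the bound above, $\int_UV_k\,d\mu_t\le\int_{\mathbb{R}^d}V_k\,d\mu_t\le\liminf_{m\to\infty}\int_{\mathbb{R}^d}V_k\,d\mu_t^m\le M'$ for every $k$ and every $t\in[0,T]$. Since $b_k\to b$ uniformly on $U\times[0,T]$, the Cauchy--Schwarz inequality gives
\begin{align*}
\int_0^T\int_U|b-b_k|\sqrt{V_k}\,d\mu_t\,dt
&\le\|b-b_k\|_{L^{\infty}(U\times[0,T])}\int_0^T(\mu_t(U))^{1/2}\Bigl(\int_UV_k\,d\mu_t\Bigr)^{1/2}dt\\
&\le\|b-b_k\|_{L^{\infty}(U\times[0,T])}\,T\,m_0^{1/2}(M')^{1/2},
\end{align*}
which tends to $0$ as $k\to\infty$.

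The crux — the point where hypotheses~(i) and~(ii) genuinely cooperate — is the uniform bound $\int_{\mathbb{R}^d}V_k\,d\mu_t\le M'$ for the limiting measures. The Lyapunov inequality only controls $\int V_k\,d\mu_t^k$; it is precisely the comparison $V_k\le C_1V_m+C_2$ (together with the fact that the constant $C_3$ in~(i) is uniform in $k$) that converts this into a bound on $\int V_k\,d\mu_t^m$ for all large $m$, which then survives in the lower semicontinuous vague limit. Everything else is the standard "flow representation plus weak compactness" scheme, the only technical care being the choice of a countable test-function family adapted to vague convergence and the routine bookkeeping in the limit passage.
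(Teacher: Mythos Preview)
Your proof is correct and follows essentially the same strategy as the paper: construct approximate solutions $\mu_t^k$ for the smooth fields $b_k$, derive the uniform Lyapunov bound $\int V_k\,d\mu_t^k\le e^{C_3T}\|V_k\|_{L^1(\nu)}$, use the comparison $V_k\le C_1V_m+C_2$ to transfer this to a bound on $\int V_k\,d\mu_t^m$ for $m\ge k$, extract a subsequential limit by compactness, and conclude~(\ref{ex-m}) via Cauchy--Schwarz. The only cosmetic differences are that you obtain the Lyapunov bound pathwise along the flow $X^k_{0,t}$ rather than by testing the weak formulation against $V_k\psi_N$ and applying Gronwall in integral form, and you spell out the Arzel\`a--Ascoli\,/\,diagonal compactness step more explicitly than the paper (which simply cites a reference); neither changes the substance of the argument.
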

\begin{proof}
Since $b_k$ is a smooth vector field of linear growth, there exists a nonnegative finite
solution $(\mu_t^k)_{t\in[0, T]}$ to the Cauchy problem (\ref{e1}) with $b_k$.
Moreover, $\mu_t^k(\mathbb{R}^d)\le\nu(\mathbb{R}^d)$.
Using the standard compactness arguments and the diagonal procedure (see \cite{ManSH} for details in a similar situation),
one can find
a subsequence $\{k_l\}$ such that
on every compact set in $\mathbb{R}^d$
the sequence $\{\mu_t^{k_l}\}$  converges weakly to some solution $\mu_t$ for every $t\in[0, T]$.

Let $\psi_N(x)=\psi(x/N)$. We have
$$
\int_{\mathbb{R}^d}V_{k_l}\psi_N\,d\mu_t^{k_l}=\int_{\mathbb{R}^d}V_{k_l}\psi_N\,d\nu
+\int_0^t\int_{\mathbb{R}^d}\bigl[\langle b_{k_l},\nabla\psi_N\rangle V_{k_l}
+\psi_N\langle b_{k_l}, \nabla V_{k_l}\rangle\bigr]\,d\mu_s^{k_l}\,ds.
$$
Using  conditions (i) and (ii) and letting $N\to\infty$, we arrive at the equality
$$
\int_{\mathbb{R}^d}V_{k_l}\,d\mu_t^{k_l}=\int_{\mathbb{R}^d}V_{k_l}\,d\nu
+C_3\int_0^t\int_{\mathbb{R}^d}V_{k_l}\,d\mu_s^{k_l}\,ds.
$$
Applying Gronwall's inequality we obtain that
$$
\int_{\mathbb{R}^d}V_{k_l}\,d\mu_t^{k_l}\le e^{C_3t}\int_{\mathbb{R}^d}V_{k_l}\,d\nu.
$$
Since $V_k\le C_1V_m+C_2$ for every $k, m$ such that $k\le m$, we obtain the estimate
$$
\int_{\mathbb{R}^d}V_{k}\,d\mu_t\le\lim_{l\to\infty}\int_{\mathbb{R}^d}V_{k}\,d\mu_t^{k_l}\le
C_2\nu(\mathbb{R}^d)+C_1\lim_{l\to\infty}\int_{\mathbb{R}^d}V_{k_l}\,d\mu_t^{k_l}.
$$
Hence
$$
\int_{\mathbb{R}^d}V_k\,d\mu_t\le C_2\nu(\mathbb{R}^d)+C_1e^{C_3t}\sup_k\int_{\mathbb{R}^d}V_k\,d\nu.
$$
Finally, we have
$$
\int_0^T\int_{U}|b-b_{k}|\sqrt{V_{k}}\,d\mu_t\,dt\le
\biggl(\int_0^T\int_{U}|b-b_{k}|^2\,d\mu_t\,dt\biggr)^{1/2}
\biggl(\int_0^T\int_{U}V_{k}\,d\mu_t\,dt\biggr)^{1/2},
$$
where the first factor tends to zero and the second one is uniformly bounded.
\end{proof}

\begin{example}\rm
Let us illustrate the previous proposition in the situation of Corollary \ref{col1}.
Assume that $b$ on $\mathbb{R}^1$ satisfies the following additional condition:
$$
(b(x+y)-b(x))y\ge -C|y|^2
$$
for all $x, y$ and some $C>0$.
Then all conditions of Proposition \ref{pr1} are fulfilled with the approximations
$b_k=b*\varrho_{1/k}+3\omega(1/k)$ and $V_k=b_k^{-2}$. Indeed, one has
$$
b_k'\ge -C \quad\hbox{and}\quad
b_kV_k'=-2b_k'V_k\le 2CV_k.
$$
 Let $k>m$. Note that the sequence of numbers $\omega(k^{-1})$ is decreasing and
$$
b*\varrho_{1/k}(x)+3\omega(1/k)\ge b(x)+2\omega(1/k)\ge b(x)+2\omega(1/m)\ge b*\varrho_{1/m}(x)+w(1/m).
$$
Hence we have $V_k\le 9V_m$. Assume that
$$
\int_{\mathbb{R}^1\setminus Z^{0}}\frac{1}{b^2}\,d\nu<\infty.
$$
As above we replace $\nu$ by $\nu-\nu_0$,
where $\nu_0=\nu|_{Z^{0}}$ is a stationary solution.
Then
$$
\sup_k\int_{\mathbb{R}^1}V_k\,d\nu\le
\int_{\mathbb{R}^1\setminus Z^{0}}\frac{1}{(b+2\omega(1/k))^2}\,d\nu=
\int_{\mathbb{R}^1\setminus Z^{0}}\frac{1}{b^2}\,d\nu<\infty.
$$
By Proposition \ref{pr1} there exists a solution $(\mu_t)_{t\in[0, T]}$ such that
equality (\ref{ex-m}) holds. Moreover,
$$
\int_{\mathbb{R}^1\setminus Z^{0}}\frac{1}{b^2}\,d\mu_t<\infty
$$
and $\mu_t(\partial Z)=0$ for almost all $t\in[0, T]$.
\end{example}

\begin{remark}\rm
There is another way of constructing a solution to the Cauchy problem (\ref{e1}).
Given $\varepsilon>0$, let us consider the Cauchy problem
$$
\partial_t\mu^{\varepsilon}-\varepsilon\Delta\mu^{\varepsilon}+{\rm div}(b\mu^{\varepsilon})=0,
\quad \mu^{\varepsilon}|_{t=0}=\nu.
$$
Under suitable conditions on $b$, there exists a limit point $\mu$ of the solutions $\mu^{\varepsilon}$.
To  describe the set of all limit points $\mu$  is a well-known problem.
Assume that $d=1$, $b\ge 0$, $b$ has a single zero $b(0)=0$ and $1/b$ is Lebesgue integrable
near the origin.
Let $\nu=\delta_0$.
It turns out (see \cite{KM}) that the measure $\mu$ is given by
a family of measures $\mu_t=\delta_{x_t}$, where $x_t$
is the upper extreme solution of the Cauchy problem $\dot{x}=b(x)$, $x(0)=0$.
Note that the upper extreme solution does not stay at $x=0$ and $\mu_t(\{0\})=0$ if $t>0$.
Thus, we obtain a solution from our uniqueness class.
\end{remark}

Note also that there are many results on existence of solutions to
continuity equations based on Lyapunov functions conditions (see, e.g., \cite[Corollary 3.4]{BDPR}),
but our purpose here is to ensure existence of solutions in our uniqueness classes.
In a separate paper the infinite-dimensional case will be considered; some related results
are obtained in our forthcoming paper~\cite{BDPRS}.

\vskip .1in

{\bf Acknowledgment} Our work was supported by the DFG through SFB 701 at Bielefeld University and
the RFBR projects 14-01-00237 and 14-01-90406.

\end{document}